%% file: main.tex
\documentclass[11pt]{article}
\usepackage{pedro}
\usepackage{scalerel}
\usepackage{xcolor}
\usepackage{tikz}

\usetikzlibrary{arrows.meta,decorations.pathreplacing}

\DeclareMathOperator{\Tr}{\text{Tr}}

\include{shortcuts}

\date{}

\begin{document}
\title{Schrijver Number Quasi-Tensorization and Multicolor Ramsey Bounds via Robust OR Polynomials}

\author{
  Ijay Narang\thanks{Georgia Institute of Technology, School of Computer Science. \texttt{inarang3@gatech.edu}.}
  \and
  Yukai Tang\thanks{Princeton University, Operations Research and Financial Engineering.\texttt{yt3846@princeton.edu}.}
}

\date{\today}

\maketitle

\begin{abstract}
    We introduce a robust OR polynomial framework for composing positive semidefinite certificates across OR constraints. We demonstrate the power of this method in two applications.
    
The first is on acute-free families. A set $\mathcal F=\{(x_i^{(1)},\ldots,x_i^{(r)})\}_{i=1}^M
\subseteq (S^{n-1})^r$ is $r$-way acute-free if, for every $i\neq j$,
there is a coordinate $t\in[r]$ such that
$\langle x_i^{(t)},x_j^{(t)}\rangle\leq 0$. We write $M_r(n)$ for the maximum size of such a set, and
$M_r^{\pm}(n)$ for the hypercube restriction. On the hypercube, $r$-way acute-free sets are independent sets for some strong power graph $G_n^{\boxtimes r}$. The Lov\'asz theta number $\vartheta(G_n)$ is
multiplicative but
exponentially loose, whereas the Schrijver number $\vartheta'(G_n)$ gives the correct order, but is not multiplicative. We bypass this obstruction by proving a general quasi-tensorization result for the Schrijver number. That is, for every collection of graphs $G_1,\ldots, G_r$ satisfying $\vartheta'(G_i)\geq 2$, there is an absolute constant $C$ such that $\vartheta'(G_1\boxtimes \cdots \boxtimes G_r) \leq \prod_{i=1}^r \vartheta'(G_i)^{C\log r \log \vartheta'(G_i)}$. Applying this result gives that $M_r^{\pm}(n) \le M_r(n) \le (2n)^{C_0 r\log r\log(2n)}$ for some absolute constant $C_0$.

The second application is on multicolor Ramsey numbers. The $r$-color Ramsey number $R_r(k)$ is the minimum $n$ such that every
$r$-coloring of the edges of the complete graph on $n$ vertices contains a monochromatic copy of $K_k$. In a breakthrough result, \cite{balister2024upperboundsmulticolorramsey} showed that
\(R_r(k)\le \exp(-\Omega(k/r^{12}))r^{rk}\)
via a geometric lemma. By improving the $r$ dependency in their geometric lemma via the OR polynomial framework, we prove that
\(R_r(k)\le \exp(-\Omega(k/(r^9(\log r)^6)))r^{rk}\).
\end{abstract}

\section{Introduction}

Many problems in extremal combinatorics have an OR-type structure: one must certify that among several coordinates, at least one satisfies a desired property. While these problems are often easy to state combinatorially, they are harder to certify analytically. In contrast, one-coordinate analogues frequently admit certificates in the form of positive semidefinite matrices or kernels, as in~\cite{Lovasz1979,schrijver1986theory,delsarte1973algebraic,DeKlerkPasechnik2007,bachoc2009optimality}. The main challenge is then to translate these one-coordinate certificates to the OR setting. We illustrate this difficulty through a concrete example.

Let $\sphere$ denote the unit sphere in $\R^n$. For an integer \(r\ge 1\), we say that a set of \(r\)-tuples $\{(x_i^{(1)},\ldots,x_i^{(r)})\}_{i=1}^M \subseteq ( \sphere)^r$ is \emph{\(r\)-way acute-free} if for every \(i\ne j\), there exists $t\in[r]$ such that $\langle x_i^{(t)},x_j^{(t)}\rangle\leq 0.$  Given any positive integers $n$ and $r$, we ask: What is the maximum size, $M_r(n)$, of an $r$-way acute-free family?

Beyond being a natural OR-type problem, this problem has two motivations. The first comes from a recent work of ~\cite{balister2024upperboundsmulticolorramsey}.\footnote{In fact, the problem of characterizing $M_2(n)$ was explicitly asked by the authors of \cite{balister2024upperboundsmulticolorramsey} in presentations. We thank Noga Alon
for communicating this problem to us.} Their improved multicolor Ramsey
bound relies on a geometric lemma showing that joint restrictions on pairwise
inner products of several vector-valued functions force clustering in one
coordinate. A hypercube specialization of this theme led to a characterization
of worst-case inner products under bijections~\cite{NarangJu2025}, which asks
how much a bijection of \(\{\pm1\}^n\) can reduce simultaneous nonnegative
inner products. The problem of characterizing $M_r(n)$ is the corresponding extremal version: instead of
lower-bounding the probability of such a simultaneous event, we ask how large
a family can be when it is forbidden for every pair.

The second motivation comes from generalizing coding theoretic bounds. When $r = 1$, a classical theorem of Rankin states that a set of vectors \(A\subseteq \sphere\) with pairwise nonpositive inner products has size at most \(2n\), with equality attained by the cross-polytope \(\{\pm e_1,\ldots,\pm e_n\}\)~\cite{Rankin1955}, i.e., spherical codes at angular separation \(\pi/2\) have maximum size $2n$. In the hypercube setting, the corresponding threshold statement is the Plotkin bound for binary codes of relative distance at least \(1/2\) \cite{MacWilliamsSloane1977,vanLint1999}. 

To understand the additional difficulty of studying $M_r(n)$ versus $M_1(n)$, it is instructive to study the restricted case where all vectors lie on the Boolean hypercube. In this setting, analyzing the Boolean-restricted extremal quantity can be reformulated as a question about graph strong powers. 
Recall that the strong product \(G\boxtimes H\) of two graphs \(G\) and \(H\) is the graph with vertex set \(V(G)\times V(H)\), where two distinct vertices \((g,h)\) and \((g',h')\) are adjacent if
\[
(g=g' \text{ or } gg'\in E(G))
\qquad\text{and}\qquad
(h=h' \text{ or } hh'\in E(H)).
\]
and the \(r\)-fold strong power \(G^{\boxtimes r}\) is $\underbrace{G \boxtimes G \boxtimes \cdots \boxtimes G}_{r\text{ times}}$.

Let $\Hyp = \{\pm 1\}^n$ and $M_r^{\pm}(n)$ denote the largest size of an $r$-way acute-free set contained in $\Hyp^r$ and $G_n$ be the graph with vertex set $V(G_n)=\{\pm 1\}^n$, where two distinct vertices $x,y$ are adjacent if $\langle x,y\rangle>0$. Consider the $r$-th strong power $G_n^{\boxtimes r}$, whose vertices are $r$-tuples $(x^{(1)},\ldots,x^{(r)})\in(\{\pm 1\}^n)^r$. Two distinct $r$-tuples $(x^{(1)},\ldots,x^{(r)}),(y^{(1)},\ldots,y^{(r)})$ are adjacent in $G_n^{\boxtimes r}$ when they are acute in every coordinate, i.e., when $\langle x^{(t)},y^{(t)}\rangle>0$ for every $t\in[r]$. Therefore, an independent set in $G_n^{\boxtimes r}$ is an $r$-way acute-free set on the Boolean hypercube, giving that $M_r^{\pm}(n)=\alpha(G_n^{\boxtimes r}).$

One natural approach to bound $M_r^{\pm}(n)$ for any $r$ is using the \lovasz theta number $\vartheta(G)$. This is because the theta number enjoys the property that $\vartheta(G^{\boxtimes r}) =\vartheta(G)^r$. For a given graph $G = (V,E)$, it is defined as
\begin{equation}
    \vartheta(G)
    =
    \max \left\{
        \sum_{u,v\in V} X_{uv}
        : 
        X \succeq 0, \ 
        \operatorname{Tr}(X)=1,\ 
        X_{uv}=0 \quad \forall\, \{u,v\}\in E
    \right\}.
\end{equation} 

Unfortunately, for the graph \(G_n\) defined above, the \lovasz number displays a large gap. Samorodnitsky observed in an unpublished manuscript that \(\vartheta(G_n) \approx 2^{0.19n}\)~\cite{samorodnitsky1998extremal}, despite the fact that $\alpha(G_n) \leq 2n$ by the Plotkin bound. However, a strengthened semidefinite relaxation, called the Schrijver SDP \cite{Schrijver1979}, gives a much stronger bound on $G_n$. Given a graph $G = (V,E)$, we denote its Schrijver number to be $\vartheta'(G)$, which is defined as
\begin{equation}\label{def:schrijver-sdp-primal}
    \vartheta'(G)
    =
    \max \left\{
        \sum_{u,v\in V} X_{uv}
        :
        X \succeq 0,\ 
        X_{uv}\geq 0 \quad \forall\, u,v\in V,\ 
        \operatorname{Tr}(X)=1,\ 
        X_{uv}=0 \quad \forall\, \{u,v\}\in E
    \right\}.
\end{equation}
In fact, one always has $\vartheta'(G_n)\leq 2n$, which is significantly tighter than $\vartheta(G_n)$.\footnote{To the best of our knowledge, $G_n$ displays the largest known multiplicative separation between the \lovasz theta number $\vartheta$ and the Schrijver number $\vartheta'$.} One underlying reason for the tightness of $\vartheta'(G_n)$ is that the Schrijver number reduces to the Delsarte LP bound on association schemes \cite{Schrijver1979}. For more details on the Delsarte LP, see, for example, \cite{delsarte1973algebraic, godsil2016}. Unfortunately, despite only differing from $\vartheta(G)$ by the additional non-negativity constraint,  \(\vartheta'(G)\) is not multiplicative under graph strong product. Thus, to use the extra strength of \(\vartheta'(G_n)\), one may wish to also `tensorize' \(\vartheta'(G_n)\) to control \(\vartheta'(G_n^{\boxtimes r})\). Our first result achieves this by providing a general quasi-tensorization theorem for the Schrijver number. Before we state our result, we introduce the robust OR polynomial framework utilized to attain this result. 

At a high level, the framework composes one-coordinate positive semidefinite certificates into certificates for OR-type constraints. The starting point is a collection of one-coordinate positive semidefinite certificates. These certificates are first passed through compressors, which regularize their values so that any coordinate witnessing the OR constraint, namely any ``bad'' coordinate, is mapped close to \(-1\), while the remaining values stay in a controlled range. We then apply a robust OR polynomial with nonnegative coefficients. The robust OR property detects the presence of a bad coordinate, while coefficient nonnegativity ensures that the certificate remains positive semidefinite under composition. Thus, the method gives a general certificate-level way to handle OR constraints without requiring exact tensorization.

\begin{figure}[h]
\centering

\begin{tikzpicture}[
    box/.style={
        draw=black,
        line width=1.1pt,
        minimum width=3.3cm,
        minimum height=1.8cm,
        align=center,
        inner sep=8pt,
        font=\normalsize
    },
    arrow/.style={
        -{Latex[length=3mm,width=2mm]},
        line width=1.1pt,
        red
    },
    bigbrace/.style={
        decorate,
        decoration={brace, mirror, amplitude=10pt},
        line width=1.2pt,
        red
    }
]

\node[box] (A) at (0,0) {One-coordinate\\PSD certificate};
\node[box] (B) at (4.9,0) {Compressor};
\node[box] (C) at (9.8,0) {Robust OR\\polynomial};

\draw[arrow] (A.east) -- (B.west);
\draw[arrow] (B.east) -- (C.west);

\draw[bigbrace]
    ([yshift=-0.55cm]A.south west) --
    ([yshift=-0.55cm]C.south east)
    node[midway, yshift=-0.65cm, black, font=\normalsize]
    {``Quasi-tensorized'' PSD certificate};

\end{tikzpicture}

\caption{Diagrammatic Workflow of the robust OR polynomial framework.}
\label{fig:or-polynomial-proof-strategy}

\end{figure}

Instantiating this framework with an appropriately chosen one-coordinate kernel and compressor enables our first main theorem.

\begin{theorem} \label{thm:schrijver-quasi-tensorization} Let $r\geq 2$ and $G_1 = (V_1, E_1), \ldots, G_r = (V_r, E_r)$ be graphs. Suppose $\vartheta'(G_i) \geq 2$ for all $i\in [r]$. Then, there is an absolute constant $C > 0$ such that
\[
    \vartheta'(G_1\boxtimes \ldots \boxtimes G_r) \leq \prod_{i=1}^r \vartheta'(G_i)^{C\log r \log \vartheta'(G_i)}.
\]
\end{theorem}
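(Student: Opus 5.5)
The plan is to pass to the dual of the Schrijver SDP and build a product certificate coordinatewise. We use the standard dual characterization: $\vartheta'(G)\le t$ if and only if there is a symmetric $Z\succeq 0$ with $Z_{uu}=t-1$ for all $u$ and $Z_{uv}\le -1$ for every distinct non-adjacent pair $u,v$, with entries on edges unconstrained. Weak duality is immediate. For a primal-feasible $X$ we have $\langle X,Z\rangle\ge 0$, while $X_{uv}=0$ on edges and $X\ge 0$ entrywise, so $(t-1)\Tr X\ge \sum_{u\ne v}X_{uv}$ and hence $\sum_{u,v}X_{uv}\le t$. Strong duality holds because the primal is strictly feasible. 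Normalizing, $\vartheta'(G)\le t$ is witnessed by a kernel $K\succeq 0$ with unit diagonal and $K_{uv}\le -\delta:=-1/(t-1)$ on non-edges. Since $K\succeq 0$ with unit diagonal, all entries lie in $[-1,1]$.

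Now fix optimal kernels $K_i$ for $G_i$, with $t_i=\vartheta'(G_i)\ge 2$, so that $\delta_i\in(0,1]$. In $G_1\boxtimes\cdots\boxtimes G_r$, two distinct vertices are non-adjacent exactly when some coordinate $i$ has $u_i\ne v_i$ and $u_iv_i\notin E_i$. This is a \emph{bad} coordinate, where $(K_i)_{u_iv_i}\le-\delta_i$; at every other coordinate the entry lies in $[-1,1]$, and it equals $1$ when $u_i=v_i$. By the Schur product theorem and closure of the PSD cone under nonnegative combinations, for any polynomial $P(y_1,\dots,y_r)$ with nonnegative coefficients the kernel $F_{uv}=P((K_1)_{u_1v_1},\dots,(K_r)_{u_rv_r})$ is PSD. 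After dividing by $P(1,\dots,1)$, it therefore suffices to find a nonnegative-coefficient $P$ with $P(1,\dots,1)=1$ such that $P(y)\le -1/(T-1)$ whenever $y\in[-1,1]^r$ has some $y_i\le-\delta_i$. Then $\vartheta'(\boxtimes_i G_i)\le T$.

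I would build $P$ in the two stages of Figure~\ref{fig:or-polynomial-proof-strategy}. First, for each coordinate, a compressor $c_i$ with nonnegative coefficients and degree $d_i=O(\log r\cdot\log t_i)$. It is normalized so that $c_i(1)=1$ and $c_i([-1,1])\subseteq[-1,1]$, and it maps the bad range $[-1,-\delta_i]$ to values $\le -1+\eta_i$ relative to the positive mass. A natural first attempt is an odd-dominant combination such as $c_i(x)=\sum_k a_kx^{2k+1}$, rebalanced against the diagonal by convex combination with lower powers. Second, a robust OR polynomial $Q(z_1,\dots,z_r)$ with nonnegative coefficients, $Q(1,\dots,1)=1$, that is sufficiently negative whenever some $z_i\le-1+\eta$ and the other inputs are arbitrary in $[-1,1]$. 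The coordinate-$i$ contribution to the final gap should cost a factor $t_i^{O(\log r\log t_i)}$. Degree $\log r$ in $Q$ buys robustness against $r$ uncontrolled coordinates (a union-bound-type loss), and degree $\log t_i$ in $c_i$ amplifies the gap $\delta_i=1/(t_i-1)$. Then $P=Q(c_1,\dots,c_r)$, and the diagonal-to-bad ratio gives $T\le\prod_i t_i^{C\log r\log t_i}$. Here the hypothesis $t_i\ge2$ is used to absorb constants into the exponent.

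The main obstacle is the tension created by nonnegative coefficients. Such a polynomial satisfies $|c(x)|\le c(|x|)$, so it cannot simultaneously keep $c(1)=1$ and push $c(-\delta)$ near $-1$ unless the odd part dominates near $\delta$. That in turn forces large positive values at $+\delta$ and at the free (edge) coordinates. The same tension appears in $Q$: the non-bad coordinates can take any value in $[-1,1]$, including values that make a naive product like $\prod_i(1+z_i)/2$ positive. I expect most of the work to be in choosing $Q$, which I would first try as an odd power of a shifted average, $Q(z)\propto\bigl(\tfrac1r\sum_i z_i\bigr)^{m}$ combined with Schur products. I would then track the quantitative tradeoff between the degrees and the final negativity $1/(T-1)$ carefully enough to land at $\log r\cdot\log t_i$ in the exponent.
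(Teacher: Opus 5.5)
Your reduction is sound. A nonnegative-coefficient $P$ with $P(1,\dots,1)=1$ and $P\le -1/(T-1)$ whenever some coordinate is bad does give $\vartheta'(G_1\boxtimes\cdots\boxtimes G_r)\le T$. The paper's certificate has exactly this form after the substitution $z_i=1+2(t_i-1)y_i$ and removal of the constant term. But constructing $P$ is the whole content of the theorem, and the compressor you specify cannot exist. If $c(x)=\sum_k a_kx^k$ has $a_k\ge 0$ and $c(1)=1$, then $c(-\delta)\ge-\sum_{k\ge1}a_k\delta^k\ge-\delta$. So no compressor of any degree, odd-dominant or not, sends $[-1,-\delta_i]$ to values $\le -1+\eta_i$. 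The tension you flag is an outright impossibility in that normalization, not a tradeoff to be tuned.

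The missing idea is to \emph{not} normalize. The paper works with $Z_i=2\tilde K_i-J=J+2(\tilde K_i-J)$, where $\tilde K_i-J\succeq 0$. Its bad entries are $\le -1$ in absolute terms, and every entry has modulus at most $2\vartheta'(G_i)-1$. It applies the unnormalized sign-type compressor $T_i(z)=\bigl(1+zF_i(z^2)/H_i(z^2)\bigr)^L-1$. Here $F_i/H_i$ is a nonnegative-coefficient rational approximation of $x^{-1/2}$ on $[1,4\vartheta'(G_i)^2]$ of degree $O(\log\vartheta'(G_i))$ (sinc quadrature, Lemma~\ref{lem:approx-sqrt-x}). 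Denominators are then cleared by multiplying by $\prod_i H_i(z_i^2)^L$. Because a nonnegative-coefficient polynomial in kernels of the form $J+\text{PSD}$ is again $\gamma J+\text{PSD}$ with $\gamma=\sum_\alpha c_\alpha$, one only needs $K\le 0$ on non-edges (Lemma~\ref{lem:general-schrijver-bound} with $\beta=0$). The entire loss, $\prod_i(2\vartheta'(G_i))^{D_i}$ with $D_i\le L(2d_i+1)$, is paid on the diagonal.

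Your candidate OR polynomial also fails. At $z=(-1,1,\dots,1)$ one has $\frac1r\sum_i z_i=(r-2)/r\ge 0$, so $(\frac1r\sum_i z_i)^m$ does not detect a single bad coordinate. What is needed is $Q_r(w)=\prod_i(1+w_i)-\prod_i\bigl(1+(1-\frac1r)w_i\bigr)$. It has nonnegative coefficients and satisfies $Q_r(w)\le-\frac12 r^{-r}$ whenever some $w_j\in[-1,-1+\eta]$ with $\eta\le(8er)^{-1}$, with the other $w_i\ge -1$ arbitrary and not confined to $[-1,1]$ (Lemma~\ref{lem:stren-or}). This robustness to large good inputs lets the compressor power be only $L=O(\log r)$, just enough that $8^{-L}\le(8er)^{-1}$. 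That, not the degree of $Q$, is the source of the $\log r$ in the exponent. The naive $\prod_i(1+w_i)-1$ would force $\eta$ to be exponentially small in $r$, and the resulting exponent would be polynomial rather than logarithmic in $r$.
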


The dependencies on $\vartheta'(G_i)$ and $r$ are discussed in Remark~\ref{rem:quasipolynomial-dependency}. 
A direct application of this quasi-tensorization theorem
gives quasipolynomial bounds on $M_r(n)$ and $M_r^{\pm}(n)$.

\begin{corollary} \label{cor:Mr(n)}
For every \(n\ge2\) and \(r\ge2\), let \(G_n\) be defined as above. Then there
exists a constant \(C_0>0\) such that
\begin{itemize}
    \item $M_r^{\pm}(n)
    \le
    \vartheta'(G_n^{\boxtimes r})
    \le
    (2n)^{C_0 r\log r\log(2n)}$.
    \item $M_r(n) \le
    (2n)^{C_0 r\log r\log(2n)}$.
\end{itemize}
\end{corollary}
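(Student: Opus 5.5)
The corollary is a direct consequence of Theorem~\ref{thm:schrijver-quasi-tensorization} plus two standard facts. Part one has three ingredients. First, $M_r^{\pm}(n)=\alpha(G_n^{\boxtimes r})\le\vartheta'(G_n^{\boxtimes r})$, since $\vartheta'$ upper bounds the independence number; this is witnessed by $X=\frac{1}{|S|}\mathbf 1_S\mathbf 1_S^\top$ for an independent set $S$, which is PSD, entrywise nonnegative, has trace one, vanishes on edges, and has total sum $|S|$. Second, $2\le\vartheta'(G_n)\le 2n$. The upper bound is the Delsarte/Plotkin-type bound quoted in the introduction. For the lower bound, use $\vartheta'(G_n)\ge\alpha(G_n)\ge 2$, because $x$ and $-x$ are nonadjacent. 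Third, we apply Theorem~\ref{thm:schrijver-quasi-tensorization} with all $G_i=G_n$. This gives $\vartheta'(G_n^{\boxtimes r})\le \vartheta'(G_n)^{Cr\log r\log\vartheta'(G_n)}$. The map $t\mapsto t^{Cr\log r\log t}$ is increasing for $t\ge 1$, so substituting $\vartheta'(G_n)\le 2n$ yields $(2n)^{Cr\log r\log(2n)}$, and we may take $C_0=C$.

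For the spherical statement $M_r(n)$, I would reduce to the hypercube, possibly at the cost of enlarging $n$ polynomially, which only changes $C_0$. The cleanest route I would try first is a random-rounding argument. Take a family $\{(x_i^{(t)})\}\subseteq(\sphere)^r$ of size $M$ that is $r$-way acute-free. The goal is to produce a graph whose independence structure we can bound by Schrijver numbers.

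A simpler alternative avoids rounding altogether. Let $H$ be the infinite graph on $\sphere$ with $x\sim y$ iff $\langle x,y\rangle>0$. The family is an independent set in the finite induced subgraph $H_t$ of $H$ on the points $\{x_i^{(t)}\}_i$, where coordinates with repeated points are handled by noting that equal points are adjacent. More precisely, the family is independent in $H_1\boxtimes\cdots\boxtimes H_r$. Each finite $H_t$ satisfies $\vartheta'(H_t)\le 2n$, and the key is to prove this by dualizing Rankin's argument. A Schrijver dual certificate is a matrix $\lambda I+\ldots$, and the natural choice is the kernel $K(x,y)=1+\frac{2n-1}{?}\ldots$. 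Concretely, I would use the known fact that the Delsarte LP bound for spherical codes at angle $\pi/2$ is $2n$, with polynomial $f(t)=(1+t)\,t$-type certificate; equivalently, the kernel $K(x,y)=\langle x,y\rangle^2+\tfrac{1}{n}\cdot\ldots$. Restricting a positive-definite spherical kernel to finitely many points preserves positive semidefiniteness, so the spherical Delsarte bound gives $\vartheta'(H_t)\le 2n$. We also need $\vartheta'(H_t)\ge 2$; if $\vartheta'(H_t)<2$, pad $H_t$ with antipodal points, which does not decrease $\vartheta'$ since $\vartheta'$ is monotone under adding vertices. Theorem~\ref{thm:schrijver-quasi-tensorization} then bounds $M\le\prod_t\vartheta'(H_t)^{C\log r\log\vartheta'(H_t)}\le(2n)^{C_0r\log r\log(2n)}$.

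The main obstacle is the spherical Schrijver bound $\vartheta'(H_t)\le 2n$ for arbitrary finite point sets. I expect to need an explicit dual feasible matrix $Z$ satisfying $Z\succeq 0$, with $Z_{uu}$ bounded so that $\sum Z_{uu}\cdot$ normalization gives $2n$, and with $Z_{uv}\le$ the required value on non-edges (nonpositive inner products). A candidate is a degree-two polynomial in $\langle x,y\rangle$ with nonnegative Gegenbauer coefficients that is nonpositive on $[-1,0]$. Showing that such a polynomial achieves exactly $2n$ is the Rankin/Delsarte computation, and I would verify it via the polynomial $t(t+1)$ adjusted by a constant.
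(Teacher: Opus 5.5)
Your proposal follows the paper's route. You project the family onto each coordinate and view it as an independent set in $H_1\boxtimes\cdots\boxtimes H_r$, where $H_t$ is the positive-inner-product graph on the finite set of $t$-th coordinates (the paper's $G_{\mathcal F_t}$). You then bound $\vartheta'(H_t)\le 2n$ by a degree-two zonal kernel, secure $\vartheta'(H_t)\ge 2$, and invoke Theorem~\ref{thm:schrijver-quasi-tensorization}. Your hypercube part, which applies the theorem directly to $G_n$ with $\vartheta'(G_n)\ge\alpha(G_n)\ge 2$, is exactly what the middle inequality requires.

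The one real difference is how you get $\vartheta'(H_t)\ge 2$. You adjoin an antipodal point. This keeps $H_t$ as an induced subgraph, so the family stays independent in the product, and keeps the vertex set a finite subset of $S^{n-1}$, so the $2n$ bound still applies. The paper instead takes a maximum family and doubles it by negating coordinate $t$ if $G_{\mathcal F_t}$ were complete. Your version is slightly more robust, because it does not presuppose that a maximum family exists.

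What is not yet a proof is the step you flag as the main obstacle, and as written it contains placeholders. Both parts of your argument rest on it: the introduction's claim $\vartheta'(G_n)\le 2n$, which you cite in part one, is just the special case $\{\pm1\}^n/\sqrt n$ of the spherical bound. It closes in a few lines.

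Take $K(x,y)=q(\langle x,y\rangle)$ with $q(t)=n(t+t^2)$; the ``constant'' is the multiplicative factor $n$, not an additive shift. Then:
\begin{itemize}
\item $q\le 0$ on $[-1,0]$ and $q(1)=2n$.
\item On any finite $\{x^{(i)}\}\subseteq S^{n-1}$, $K-J\succeq 0$. The linear part gives $\sum_{i,j}a_ia_j\langle x^{(i)},x^{(j)}\rangle=\|\sum_i a_ix^{(i)}\|_2^2\ge 0$.
\item For the quadratic part, set $B=\sum_i a_i\,x^{(i)}{x^{(i)}}^{\top}$. Cauchy--Schwarz gives $(\sum_i a_i)^2=\operatorname{Tr}(B)^2\le n\|B\|_F^2=n\sum_{i,j}a_ia_j\langle x^{(i)},x^{(j)}\rangle^2$.
\end{itemize}
Lemma~\ref{lem:general-schrijver-bound} with $\gamma=1$, $\beta=0$, $\delta=2n$ then gives $\vartheta'\le 2n$.

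In your Gegenbauer language, $t+t^2=\frac1n+t+(t^2-\frac1n)$, and $t^2-\frac1n$ is a positive multiple of the degree-two Gegenbauer polynomial. Note that the Schrijver dual needs positive semidefiniteness of $K-f_0J$, i.e.\ of the non-constant part, not merely of $K$. Your sentence about restricting a positive-definite kernel should be read with that in mind.
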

We note that one can easily find a lower bound. Consider the set $A=\{\pm e_1,\ldots,\pm e_n\}\subset \mathbb R^n$, then
$A^r\subseteq (\mathcal S^{n-1})^r$ is $r$-way acute-free: any two distinct $r$-tuples differ in some coordinate, and in that coordinate
the corresponding vectors have nonpositive inner product. This, together with Corollary \ref{cor:Mr(n)} gives
\[
    (2n)^r \le M_r(n) \le (2n)^{C_0 r\log r\log(2n)}.
\]

We now proceed to our second application: improving upper bounds on multicolor Ramsey numbers via the robust OR polynomial framework. Let \(R_r(k)\) denote the diagonal \(r\)-color Ramsey number, namely the
smallest \(N\) such that every \(r\)-coloring of the edges of \(K_N\) contains a monochromatic \(K_k\). Recently, in a breakthrough result, \cite{balister2024upperboundsmulticolorramsey} proved that \(R_r(k)\le \exp(-\Omega(k/r^{12}))r^{rk}\).
The key to their analysis is a geometric lemma characterizing non-negative inner products for arbitrary
vector-valued functions. By applying our robust OR polynomial framework, we are able to prove a stronger geometric lemma in terms of the dependence on $r$. 

\begin{lemma}
\label{lem:or-composed-geometric-lemma}
Let \(r\ge 2\). Let \(U,U'\) be independent identically distributed random
variables taking values in a finite set \(X\), and let
\(\sigma_1,\ldots,\sigma_r:X\to \mathbb R^n\) be arbitrary maps. Define
\(Z_i=\langle \sigma_i(U),\sigma_i(U')\rangle\). Then there are
\(i\in[r]\) and \(\lambda\ge -1\) such that
\begin{equation} \label{eq:ramsey_lemma}
    \mathbb P\!\left[
        Z_i\ge \lambda
        \ \text{and}\
        Z_j\ge -1 \text{ for every } j\neq i
    \right]
    \ge
    \beta_r \exp\!\left(-C_r\sqrt{\lambda+1}\right),
\end{equation}
where \(C_r\le C r\log(r)\) and
\(\beta_r\ge \exp(-C r\log(r))\) for an absolute constant \(C>0\).
\end{lemma}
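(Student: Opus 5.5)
We would follow the strategy of the geometric lemma of \cite{balister2024upperboundsmulticolorramsey}, replacing their crude way of combining the $r$ coordinates with a robust OR polynomial. Argue by contradiction: suppose that for every $i\in[r]$ and every $\lambda\ge -1$ the probability in \eqref{eq:ramsey_lemma} is smaller than $\beta_r e^{-C_r\sqrt{\lambda+1}}$.

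\textbf{Step 1 (one-coordinate certificates).} For each $i$, the kernel $K_i(u,u')=Z_i=\langle\sigma_i(u),\sigma_i(u')\rangle$ is positive semidefinite. The same holds for every polynomial in $Z_i$ with nonnegative coefficients, and for entrywise products of such polynomials over different $i$ (Schur product theorem). Consequently, for any such composite kernel $P(Z_1,\ldots,Z_r)$ we have $\mathbb E[P(Z_1,\ldots,Z_r)]\ge 0$, since $U,U'$ are i.i.d.

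\textbf{Step 2 (compressor).} Choose a one-variable polynomial $q$ with nonnegative coefficients, for example a truncated exponential or a Chebyshev-type polynomial in $Z_i$ of degree $d$. It should satisfy two properties. First, $q$ maps the region $Z_i<-1$ (a bad coordinate) close to $-1$ after affine normalization. Second, on $Z_i\ge -1$ its size is controlled by roughly $e^{c\sqrt{d}\sqrt{Z_i+1}}$. The $\sqrt{\lambda+1}$ exponent arises exactly because Chebyshev polynomials of degree $d$ grow like $e^{d\sqrt{x}}$ just outside their interval.

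\textbf{Step 3 (robust OR).} Compose with a robust OR polynomial $p$ in $r$ variables with nonnegative coefficients and degree about $\sqrt r$ up to logs, or $O(r\log r)$ in total weight. It should be $\le -1$ (strongly negative) whenever some input is near $-1$, and at most a controlled positive amount otherwise. The nonnegativity constraint on coefficients is handled by adding a large multiple of $\prod_i(\text{something PSD})$ as in the paper's framework.

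\textbf{Step 4 (contradiction).} Set $F=p(q(Z_1),\ldots,q(Z_r))$; Step 1 gives $\mathbb E[F]\ge 0$. Split the probability space by which coordinates have $Z_j<-1$:
\begin{itemize}
\item On the event that some $Z_j<-1$, the OR property forces $F\le -1$. This event has probability bounded below unless the target event is already large; in particular the diagonal-like event carries mass at least $\mathbb P[U=U']$-type contributions, with a normalization as in \cite{balister2024upperboundsmulticolorramsey}.
\item On the complementary event $\{Z_j\ge -1\ \forall j\}$, bound $F$ by a sum over $i$ of $\exp(C\sqrt{\deg}\sqrt{Z_i+1})$ times constants. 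Integrate against the assumed tail bound for $Z_i$ by layer-cake over $\lambda$.
\end{itemize}
With $C_r\approx Cr\log r$ chosen larger than the growth rate of $q\circ p$, this integral converges to something at most $\beta_r e^{O(r\log r)}$. That is less than the negative contribution once $\beta_r=e^{-Cr\log r}$ is chosen suitably, contradicting $\mathbb E[F]\ge0$.

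\textbf{Main obstacle.} The main difficulty is Step 3 together with the degree bookkeeping. We need a robust OR polynomial with nonnegative coefficients whose total degree is only $O(r\log r)$, so that the growth $e^{\deg\cdot\sqrt{\lambda+1}}$ yields $C_r=O(r\log r)$ rather than $\mathrm{poly}(r)$. At the same time its positive part on good inputs must be bounded by $e^{O(r\log r)}$. We would first try $p(y)=A\prod_i(1+y_i)/2^r$-type products amplified by a low-degree symmetric polynomial, and check robustness when inputs are only approximately $-1$.
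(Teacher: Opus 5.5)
\textbf{Verdict: genuine gap.} Your overall skeleton is the right one. It has four parts: moment positivity (your Schur-product argument is Lemma 3.2 of \cite{balister2024upperboundsmulticolorramsey}); a test function with nonnegative coefficients that is $\le -1$ off $E=\{Z_j\ge-1\ \forall j\}$ and has controlled growth on $E$; the dichotomy $\mathbb P(E)\ge\beta_r$ (take $\lambda=-1$) versus $\mathbb P(E^c)>1/2$; and a layer-cake contradiction. However, the two constructions that carry all the content are missing, and the substitutes you suggest would fail.

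\textbf{The compressor cannot be a polynomial.} The $\sigma_i$ are arbitrary, so $Z_i$ can be arbitrarily negative. Every nonconstant polynomial, even after an affine normalization, is unbounded as $x\to-\infty$, so it cannot send all of $\{x<-1\}$ near $-1$. A truncated exponential fails for this reason. Chebyshev-type polynomials with nonnegative coefficients are bounded only on a finite window. If you let the degree $d$ grow with the range of $Z_i$ in a given instance, your growth rate $c\sqrt d$ becomes instance-dependent, so $C_r$ is no longer uniform. What is needed is an \emph{entire} function with nonnegative Taylor coefficients that is small on the whole negative axis; termwise expansion of $\mathbb E P(Z)$ is harmless because $X$ is finite. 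The paper takes $B(x)=\sum_{k\ge0}x^k/(k!)^2$, which equals $J_0(2\sqrt{-x})$ for $x<0$ and $I_0(2\sqrt x)\le e^{2\sqrt x}$ for $x\ge0$. It then sets $T(x)=B(Ax)^{2m}-1$, with $|J_0(2\sqrt a)|\le e^{-3}$ for $a\ge A$ and $m=\lceil\frac16\log(8er)\rceil$. This gives $T(x)\in[-1,-1+(8er)^{-1}]$ for all $x\le-1$ and $1+T(x)\le\exp(O(\log r)\sqrt{x+1})$ for $x\ge-1$. The $\log r$ is the cost of the precision $(8er)^{-1}$.

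\textbf{The robust OR polynomial is left open, and your proposed route cannot work.} A multiple of $\prod_i(1+y_i)$ is $\ge0$ whenever all $y_i\ge-1$, so it can never flag a bad coordinate by its sign. Repairing coefficient signs by adding further terms only pushes values up. Low degree is also not the issue. The missing idea is to subtract a damped copy of the same product:
$Q_r(t)=\prod_i(1+t_i)-\prod_i(1+(1-\frac1r)t_i)=\sum_{s\ge1}(1-(1-\frac1r)^s)e_s(t)$.
This is multilinear of degree $r$ with nonnegative coefficients. For $t\ge-1$ one has $1+(1-\frac1r)t\ge\frac1r$ and $(1+t)/(1+(1-\frac1r)t)\le(1-\frac1r)^{-1}$. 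Hence a single $t_j\in[-1,-1+(8er)^{-1}]$ makes the ratio of the two products at most $er\cdot(8er)^{-1}=\frac18$, which gives $Q_r\le-\frac78r^{-r}$. Trivially, $Q_r\le\prod_i(1+t_i)$ when all $t_i\ge-1$.

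\textbf{Resulting bookkeeping.} With $P=2r^rQ_r(T(Z_1),\ldots,T(Z_r))$ you get $P\le-1$ off $E$ and $P\le2r^r\exp(O(r\log r)\sqrt{M+1})$ on $E$, where $M=\max_iZ_i$. The factor $r$ in $C_r$ comes from the \emph{product} over coordinates, not from a sum over $i$ as in your Step 4. The normalization $2r^r$ is what forces $\beta_r=e^{-O(r\log r)}$. Finally, your appeal to the diagonal $U=U'$ is misplaced: there $Z_i=\|\sigma_i(U)\|^2\ge0$, so the diagonal lies in $E$ and contributes nothing to the negative side.
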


\begin{remark}
    The primary improvement in our lemma is better control on $C_r$. In particular, Lemma~3.1 of \cite{balister2024upperboundsmulticolorramsey} gives $C_r = \Theta(r^{3/2})$. We discuss why the robust OR polynomial leads to this improvement in Subsection \ref{subsec:rob_or}.
\end{remark}

Inserting this improved geometric input into the book algorithm of \cite{balister2024upperboundsmulticolorramsey} gives our second main theorem.

\begin{theorem}
\label{thm:improved-multicolor-ramsey}
There are absolute constants \(c,C_0>0\) such that, for every \(r\ge 2\)
and every \(k\ge C_0r^{14}(\log(2r))^{12}\),
\[
    R_r(k)
    \le
    \exp\!\left(
        -\frac{c k}{r^9(\log(2r))^6}
    \right)r^{rk}.
\]
\end{theorem}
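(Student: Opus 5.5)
The plan is to run the book algorithm of \cite{balister2024upperboundsmulticolorramsey} unchanged, but to feed it Lemma~\ref{lem:or-composed-geometric-lemma} in place of their Lemma~3.1, and then redo the parameter bookkeeping so that the improved constant $C_r\le Cr\log r$ (instead of $\Theta(r^{3/2})$) and the bound $\beta_r\ge\exp(-Cr\log r)$ propagate to the final exponent.

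\textbf{Where the geometric lemma enters.} The argument of \cite{balister2024upperboundsmulticolorramsey} has the following structure.
\begin{enumerate}
\item One maintains $r$ color-books and a candidate set.
\item Each vertex of the candidate set gets a vector-valued map $\sigma_i$, built from centered, normalized color-$i$ neighborhood densities.
\item The geometric lemma provides a color $i$ and a threshold $\lambda$ such that, for a random pair, color $i$ is "boosted" by $\lambda$ while every other color is not depleted below $-1$. This pair event happens with probability $\beta_r e^{-C_r\sqrt{\lambda+1}}$.
\item This event is converted, via a dependent random choice / averaging step, into either a density-boost step or a book step for color $i$.
\end{enumerate}

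\textbf{Step 1: isolate the dependence on $(\beta_r, C_r)$.} I will re-derive the main iteration inequality, writing $(\beta_r,C_r)$ as symbolic parameters. The density increment per boost step scales like $\lambda\cdot\varepsilon$. The cost in candidate-set size is $\log(1/\beta_r)+C_r\sqrt{\lambda+1}$. Optimizing over $\lambda$, the gain obtained per unit of lost size is governed by $\varepsilon^2/C_r^2$ up to the $\beta_r$ term. So I expect the final saving in the exponent to be of the form $k\cdot\varepsilon/\mathrm{poly}(r)$, with the polynomial in $r$ arising as a product of three factors:
\begin{itemize}
\item $C_r^2$;
\item the number of colors, from summing potential over the $r$ colors;
\item the threshold scale $\varepsilon\approx 1/r^{\Theta(1)}$, which is needed so that the "$Z_j\ge -1$" events for the other colors cost only a constant factor in their densities.
\end{itemize}

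\textbf{Step 2: substitute the new parameters.} The original argument has $C_r^2=r^3$ and yields $r^{12}$. Replacing $r^3$ by $r^2(\log r)^2$ saves a factor $r/(\log r)^2$ wherever $C_r^2$ appears. Reaching exactly $r^9(\log r)^6$ requires tracing that $C_r^2$ enters cubically: once directly, and twice through the choice of $\varepsilon$ and the step-size parameter, both of which are calibrated against $C_r$. This gives $r^{12}\cdot(r^{-1}(\log r)^2)^3=r^9(\log r)^6$. I would verify this by tracking the exponents through the proof's lemmas one by one. I must also check two further points:
\begin{itemize}
\item The $\beta_r$ loss $\exp(-Cr\log r)$ per step is absorbed, since it is paid only in the boost steps, whose number is bounded by $O(r\cdot\text{poly})$, and so contributes a lower-order term.
\item The hypothesis $k\ge C_0r^{14}(\log 2r)^{12}$ makes all error terms, such as $\exp(O(r^2\log r\cdot \text{steps}))$, negligible compared with $ck/(r^9(\log 2r)^6)$.
\end{itemize}

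\textbf{Main obstacle.} The hard part is Step 1: faithfully extracting how $C_r$ controls each parameter choice in the book algorithm. The original paper's constants are tuned to $C_r=\Theta(r^{3/2})$, possibly implicitly. I would first rewrite their key "one-step" lemma with $C_r$ explicit, confirm the exponent bookkeeping there, and only then redo the global counting. The final telescoping of $r^{k}$-type Erdős–Szekeres factors against the accumulated density gains is the same as in \cite{balister2024upperboundsmulticolorramsey} and transfers verbatim.
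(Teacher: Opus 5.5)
Your route is the paper's: run the book algorithm of \cite{balister2024upperboundsmulticolorramsey} unchanged, feed it Lemma~\ref{lem:or-composed-geometric-lemma}, and redo the bookkeeping. The gap is that the proposal stops exactly where the proof's content lies. You get the exponent $r^9(\log 2r)^6$ by assuming that the original $r^{12}$ factors as $r^3\cdot(C_r^2)^3$ and that only the $C_r^2$ factor changes. That is pattern-matching, not derivation. Your supporting heuristics (a saving of the form $k\varepsilon/\mathrm{poly}(r)$, with $C_r^2$ entering ``once directly and twice through $\varepsilon$ and the step size'') also do not describe what actually happens.

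What has to be proved is the parameterized statement of Lemma~\ref{lem:ramsey-bookkeeping}: if $C_r^2\le\mathcal M_r$, $\log(r/\beta_r)\le\mathcal M_r\log(2r)$ and $\log(4r\mathcal M_r^2)\le K_0\log(2r)$, then $R_r(k)\le\exp(-ck/(r^3\mathcal M_r^3))\,r^{rk}$ for $k\ge Kr^2\mathcal M_r^6$. One then takes $\mathcal M_r=K_1r^2(\log 2r)^2$. In that argument the cube arises as follows.
\begin{enumerate}
\item Each of the at most $2rt$ steps costs the reservoir a factor $\eta=(\beta_r/r)e^{-C_r\sqrt{\lambda_0+1}}$, and boost steps cost an extra $e^{-C_r\sqrt{\lambda(j)+1}}$.
\item Here $\lambda_0$ is tuned so that $C_r\sqrt{\lambda_0}\asymp\mu\log(1/\delta)$, so one needs $|X|\ge(\mu^2/p)^{\mu rt}$ with $\mu=A\mathcal M_r$.
\item Since $|X|\le n\approx r^{rk}$ and $\log\mathcal M_r=O(\log r)$, this forces $t\asymp k/\mathcal M_r$.
\item The page-set requirement $t/(8k)\ge 5\varepsilon r+2^9C_r^2/\mu^2$ then forces $\varepsilon\asymp 1/(r\mathcal M_r)$. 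This is one place where $C_r^2\le\mathcal M_r$ is used.
\item The saving is capped by the $e^{-\varepsilon^3k/2}$ factor of Lemma~5.3 of \cite{balister2024upperboundsmulticolorramsey}, used in the case $s\ge\varepsilon^2k$. This gives exactly $k/(r^3\mathcal M_r^3)$.
\end{enumerate}
The lower bound on $k$ does not come from making error terms negligible. It is the book-lemma requirement $t\ge\mu^5/p^2\asymp r^2\mathcal M_r^5$ combined with $t\asymp k/\mathcal M_r$.

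Your treatment of $\beta_r$ is also wrong as stated. The key lemma is applied at every step of the algorithm, so the factor $\beta_r/r$ is paid at each of the up to $rt$ color steps, not only at boost steps. The total loss $\beta_r^{\Theta(rt)}$ is therefore charged against the reservoir size $(\mu^2/p)^{\mu rt}$. It is harmless here only because $\log(r/\beta_r)=O(r\log r)\le\mathcal M_r\log(2r)$. This is a hypothesis that must be checked, since a much smaller $\beta_r$ would become the bottleneck; it does not follow from the boost steps being few.
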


In the following subsection, we will describe the robust OR polynomial and outline how we apply it towards proving our main results.

\subsection{The Robust OR Polynomial and Outlines of our Proofs} \label{subsec:rob_or}

We now define the robust OR polynomial used throughout this paper. Suppose that we have \(r\) real inputs
\(
    t_1,\ldots,t_r \in D,
\)
where \(D\subseteq \mathbb R\) is a common admissible domain. Let
\(\calI \subseteq D\) denote a small set of ``bad'' values. The task for the robust OR polynomial is to detect the OR event that there exists an index \(i\in [r]\) such that \(t_i\in \calI\). To be more specific, the robust OR polynomial $Q_r:D^r\to\R$ has the following property:
\[\exists i, t_i\in \calI\ \  \Rightarrow \ \ Q_r(t_1,\ldots,t_r) < 0.\]
Moreover, for our applications in this paper, we wish to test the OR event for positive semidefinite inputs. Therefore, to preserve the positive semidefiniteness, we have an additional requirement for $Q_r$:
\[
\text{All coefficients of } Q_r \text{ are nonnegative.}
\]

As a motivating example, we first consider this task on a hypercube \(\calH^r = \{\pm 1\}^r\), i.e., we want to detect if any of the inputs is $-1$. For $t = (t_1,\ldots, t_r)$, define the polynomial $\orhypercube:\calH^r \to \R$:
\begin{equation}\label{def:fail-or}
    \orhypercube(t) :=     \sum_{s=1}^r e_s(t_1,\ldots,t_r) = \prod_{i = 1}^r (1+t_i) - 1,
\end{equation}
where $e_s(t_1,\ldots,t_r) = \sum_{\substack{S\subseteq [r]\\ |S|=s}} \prod_{i\in S} t_i$  is the \(s\)-th elementary symmetric polynomial. This function has all coefficients nonnegative, and detects the OR event: if any $t_i$ has value $-1$, $f$ takes value $-1$.

Though this construction works on $\calH^r$, it lacks the robustness required when the inputs belong to a larger region. Suppose the inputs $t_1,\ldots,t_r$ lie in $[-1, \infty)$ and we want to detect if any of the inputs is in $[-1, -1+\eta]$, where $\eta > 0$ represents a small slack around $-1$. The function $f$ defined in~\eqref{def:fail-or} fails because taking $t_1 = -1 + \eta$ and $t_2 = \cdots = t_r = M$ with a large $M$ makes $f(t) = \eta (1+M)^{r-1} - 1$, which can be arbitrarily large. This observation motivates the following definition of a robust OR polynomial:
\begin{equation}\label{def:robust-or}
    Q_{r}(t_1,\ldots,t_r)
    :=
    \sum_{s=1}^r (1-(1-\frac{1}{r})^s)e_s(t_1,\ldots,t_r) = \prod_{i=1}^r(1+t_i) - \prod_{i=1}^r(1+(1-\frac{1}{r})t_i).
\end{equation}

All the coefficients of this polynomial are nonnegative.
In the next lemma, we show that $Q_r$ can detect the OR event described above. We defer its proof to the appendix.

\begin{lemma} \label{lem:stren-or}
Let \(r\ge2\), \(0<\eta\le(8er)^{-1}\), and $Q_r$ be defined as in~\eqref{def:robust-or}. Suppose \(t_i\ge-1\) for every \(i\in [r]\).  If there exists at least one coordinate in the interval \([-1, -1+\eta]\), then
\[
Q_r(t_1,\ldots,t_r)\le -\frac12 r^{-r}.
\]
\end{lemma}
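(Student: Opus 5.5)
The approach is to substitute $u_i := 1+t_i \ge 0$ and write $a := 1-\frac1r$. In these variables $1 + a t_i = \frac1r + a u_i$, so by the product form in~\eqref{def:robust-or},
\[
Q_r(t_1,\ldots,t_r) = \prod_{i=1}^r u_i - \prod_{i=1}^r\Bigl(\tfrac1r + a u_i\Bigr).
\]
$Q_r$ is symmetric, so without loss of generality $t_1 \in [-1,-1+\eta]$, that is, $0 \le u_1 \le \eta$. Isolating the first factor, set
\[
A := \prod_{i=2}^r u_i, \qquad B := \prod_{i=2}^r\Bigl(\tfrac1r + a u_i\Bigr).
\]
Then $Q_r = u_1 A - (\frac1r + a u_1)B \le \eta A - \frac1r B$, using $u_1\le\eta$, $A \ge 0$ and $a u_1 B \ge 0$.

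The key step is to compare $A$ with $B$ one factor at a time. Since $u_i \ge 0$, each factor satisfies $\frac1r + a u_i \ge \max(\frac1r, a u_i)$. Two consequences follow.

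First, $\frac1r + a u_i \ge a u_i$ for each $i$, so
\[
A \le a^{-(r-1)} B = \Bigl(1+\tfrac{1}{r-1}\Bigr)^{r-1} B \le eB.
\]
This is precisely where the weight $(1-\frac1r)$ gives robustness: it keeps large coordinates from inflating $A$ relative to $B$ by more than a factor of $e$, no matter how large they are. This is the issue that defeated $\orhypercube$.

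Second, $\frac1r + a u_i \ge \frac1r$ for each $i$, so $B \ge r^{-(r-1)}$.

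Combining these, and using $\eta \le (8er)^{-1}$,
\[
Q_r \le \Bigl(e\eta - \tfrac1r\Bigr)B \le \Bigl(\tfrac{1}{8r} - \tfrac1r\Bigr)B = -\tfrac{7}{8r}B \le -\tfrac78 r^{-r} \le -\tfrac12 r^{-r}.
\]

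The only potential obstacle is controlling $A$ when some $u_i$ are huge. The factor-wise bound $a u_i \le \frac1r + a u_i$ handles this uniformly, so I expect the argument to be short with no case analysis.
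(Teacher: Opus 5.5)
Your proof is correct and is essentially the paper's argument. Both substitute $u_i=1+t_i$ and $1+(1-\frac1r)t_i=\frac1r+(1-\frac1r)u_i$, use the factorwise bound $u_i\le (1-\frac1r)^{-1}\bigl(\frac1r+(1-\frac1r)u_i\bigr)$ to get the factor $e$, and use $\frac1r+(1-\frac1r)u_i\ge \frac1r$ to get $-\frac78 r^{-r}$. The only differences are that you isolate the bad coordinate additively instead of through the ratio $\prod_i u_i/\prod_i v_i$, and you derive the ratio bound directly rather than through the paper's monotonicity argument.
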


Although Lemma~\ref{lem:stren-or} may seem restrictive because it works specifically for detecting coordinates in $[-1,-1+\eta]$, one can build versatile polynomials by composing this robust OR polynomial with certain compressors, which regularize the inputs to be in this specific region. Indeed, such compositions are key towards our proofs of Theorems \ref{thm:schrijver-quasi-tensorization} and \ref{thm:improved-multicolor-ramsey}.

Additionally, we note that another important feature of $Q_r$ is that all the coefficients are nonnegative. By Schur's product theorem, we are able to compose this OR polynomial with other positive semidefinite kernels while preserving the positive semidefiniteness. Here, a kernel $K$ on a set $X$ is defined as a function \(K:X\times X\to \mathbb R\). We say that a kernel is positive semidefinite if, for every finite choice \(x_1,\ldots,x_m\in X\), the matrix \((K(x_i,x_j))_{i,j=1}^m\) is positive semidefinite.

We now outline the proofs of Theorems \ref{thm:schrijver-quasi-tensorization} and \ref{thm:improved-multicolor-ramsey}, and discuss how they apply the robust OR polynomial.

\paragraph{Schrijver Quasi-Tensorization. }

To prove Theorem~\ref{thm:schrijver-quasi-tensorization}, we leverage duality to show that for every graph $G = (V,E)$, if one can produce a positive semidefinite kernel $K: V\times V \to \R$ satisfying
\begin{equation}\label{eq:kernel-requirement-for-schrijver-bound}
    K(v,v) \le \delta,\ \forall v \in V, \qquad K-\gamma J \succeq 0,\qquad K(u,v) \le \beta\ 
\text{for } u \ne v \text{ and } uv \notin E
\end{equation}
for some \(0 \le \beta < \gamma\) and \(\delta\), then $\vartheta'(G)\leq \frac{\delta-\beta}{\gamma-\beta}.$ Thus our proof aims to build a positive semidefinite kernel $K$ satisfying~\eqref{eq:kernel-requirement-for-schrijver-bound} on the strong product graph $G_1\boxtimes\cdots\boxtimes G_r$ to get the bound on $\vartheta'(G_1\boxtimes\cdots\boxtimes G_r)$.

The proof starts with one-coordinate kernels $K_1,\ldots,K_r$ from the Schrijver SDPs on $G_1,\ldots,G_r$, which by definition are nonpositive on nonedges. We shift each kernel $K_i$ to make it strictly negative over nonedges. 
Then, for each \(i\), we pass the kernel \(K_i\) through a sign compressor \(T_i\). The compressor \(T_i\) is a rational function, with nonnegative coefficients in both its numerator and denominator, that behaves like a sign function on the relevant bounded interval. Concretely, \(T_i\) is constructed by approximating $\operatorname{sgn}(x)=\frac{x}{\sqrt{x^2}}$ via sinc-quadrature rational approximation. Thus, on the interval of interest, \(T_i\) sends negative inputs to values close to \(-1\). We then obtain the desired kernel \(K\) by first sending the one-coordinate kernels to the compressors, then composing with the robust OR polynomial defined in~\eqref{def:robust-or}, and lastly clearing the denominators.

\paragraph{Improved \(r\)-dependence for multicolor Ramsey.}
Before explaining our improvement, we briefly recall the book algorithm of
\cite{balister2024upperboundsmulticolorramsey}. In an \(r\)-edge-colored complete graph, a
color-\(i\) \((t,m)\)-book is a pair of disjoint vertex sets \((S,W)\) such that \(|S|=t\),
\(|W|=m\), \(S\) spans a color-\(i\) clique, and every edge between \(S\) and \(W\) has color
\(i\). We call \(S\) the spine and \(W\) the page set. Thus, if \(W\) contains a color-\(i\)
clique of size \(k-t\), then this clique together with \(S\) forms a monochromatic \(K_k\).
At a high level, the book algorithm repeatedly tries to grow a large monochromatic book. At each
stage, it either adds a vertex to the spine of some color, or it finds a density boost in one
color that can be exploited later. The geometric lemma is the local input which guarantees that
one of these two useful outcomes occurs on a sufficiently large surviving subset. 

In the present
paper, we do not modify the book algorithm itself. Instead, we improve the geometric lemma that is
fed into the same algorithm, and this improved input gives better bounds for \(R_r(k)\). Given any finite set $X$, the geometric lemma concerns arbitrary maps \(\sigma_1,\ldots,\sigma_r:X\to \mathbb R^n\). More specifically, the
geometric lemma guarantees that there are \(i\in[r]\) and \(\lambda\ge -1\) such that
\[
    \mathbb P\!\left[
        \langle \sigma_i(U),\sigma_i(U')\rangle\ge \lambda
        \ \text{and}\
        \langle \sigma_j(U),\sigma_j(U')\rangle\ge -1
        \text{ for every }j\neq i
    \right]
    \ge
    \beta_r\exp\!\left(-C_r\sqrt{\lambda+1}\right).
\]
In the geometric lemma of \cite{balister2024upperboundsmulticolorramsey}, $C_r = \Theta(r^{3/2})$ whereas our improvement gives \(C_r\le C r\log(r)\) and
\(\beta_r\ge \exp(-C r\log(r))\).
In the book algorithm, each coordinate corresponds to one color and
a large inner product in one coordinate corresponds to a density boost in that color, while lower
bounds in the remaining coordinates ensure that the other color densities do not collapse. 

We now explain why the proof of such a lemma reduces to constructing a suitable entire function.
The moment-positivity input (Lemma 3.2 of \cite{balister2024upperboundsmulticolorramsey}) says that if \(Z_i=\langle \sigma_i(U),\sigma_i(U')\rangle\), then
\(\mathbb E\prod_{i=1}^r Z_i^{a_i}\ge 0\) for every
\(a_1,\ldots,a_r\in\mathbb Z_{\ge 0}\). Thus, any entire function
\(P(Z_1,\ldots,Z_r)\) with nonnegative Taylor coefficients has nonnegative expectation. The goal
is therefore to choose \(P\) so that it is negative when some coordinate is below the threshold
\(-1\), but has controlled growth when all coordinates are at least \(-1\). The non-negativity of
\(\mathbb E P(Z_1,\ldots,Z_r)\) then forces the good event to have enough mass, so we can extract the threshold \(\lambda\).

Our improvement is in the construction of this test function. We first build a Bessel-function
compressor $T$ that has nonnegative Taylor coefficients, maps every \(x\le -1\) close to \(-1\), and satisfies the growth bound \(1+T(x)\le \exp\{O(\log(2r))\sqrt{x+1}\}\) for all \(x\ge -1\). We then compose these compressed coordinates with the robust OR polynomial so that if all \(x_i\ge -1\), then the growth of \(P\) is controlled by the
growth of the compressor, giving
\[
    P(x_1,\ldots,x_r)
    \le
    2 r^r \cdot 
    \exp \big(O(r\log(r))\sqrt{\max_i x_i+1} \big).
\]
This is the point where the robust OR polynomial improves the dependence on \(r\): it separates
the OR detection step from the growth control step, giving \(C_r=O(r\log r)\) and
\(\beta_r\ge \exp(-O(r\log r))\).
Finally, inserting this improved geometric input into the unchanged book algorithm of
\cite{balister2024upperboundsmulticolorramsey} yields
the improved Ramsey bound $R_r(k) \le \exp\left\{
        -\Omega\left(\frac{k}{r^9(\log(2r))^6}\right)
    \right\}r^{rk}$ for \(k\ge C r^{14}(\log(2r))^{12}\).

\subsection{Organization of the Paper}

 Section \ref{sec:sch-tens} contains our proof of Theorem \ref{thm:schrijver-quasi-tensorization} and its application to $r$-way acute-free families (Corollary \ref{cor:Mr(n)}). Section \ref{sec:ramsey} contains our proofs of Lemma \ref{lem:or-composed-geometric-lemma} and Theorem \ref{thm:improved-multicolor-ramsey}.

\paragraph{Notation} Let $\symm$ be the set of $n\times n$ real symmetric matrices. A matrix $A\in \symm$ is said to be positive semidefinite (denoted by $A\succeq 0$) if all its eigenvalues are nonnegative. We denote the identity matrix by $I$ and the all-ones matrix by $J$.
For a matrix $A$, we denote the matrix keeping only the diagonal entries of $A$ and setting other entries to $0$ by $\text{diag}(A)$. For a vector $x\in \R^{n}$, we denote its $i$-th entry by $x_i$. For two vectors $x,y\in\R^n$, we denote the standard inner product between them by $\ip{x}{y}$. For two matrices $A,B\in\R^{n\times n}$, we denote the Kronecker product between them by $A\otimes B$, and the Schur product between them by $A\circ B$. We define $A^{\circ k}:= \underbrace{A \circ A \circ \cdots \circ A}_{k\text{ times}}$ to be $k$-th Schur product power of a matrix $A$ and $A^{\circ 0} = J$. For $k$ matrices $A_1,\ldots,A_k\in \R^{n\times n}$, we denote $A_1\circ A_2\circ \cdots \circ A_k$ by $\bigcirc_{i=1}^k A_i$.  We denote the hypercube $\{\pm 1\}^n$ by $\Hyp$ and the unit sphere by $S^{n-1}$.  We denote the set of polynomials with nonnegative coefficients to be $\mathbb{R}_{\ge 0}[x]$, the set of nonnegative integers to be $\mathbb Z_{\geq 0}$. We denote the total variation of a function $f$ by $\operatorname{Var}(f)$. For $\alpha = (\alpha_1,\ldots,\alpha_r)\in \mathbb Z_{\geq 0}^r$ and $x = (x_1,\ldots,x_r)$, we define $x^\alpha$ to be $\prod_{i=1}^rx_i^{\alpha_i}$. We identify a symmetric kernel \(K:X\times X\to\mathbb R\) with its
matrix \((K(x,y))_{x,y\in X}\) whenever \(X\) is finite. 

\section{Schrijver Number Quasi-tensorization} \label{sec:sch-tens}

In this section, we prove Theorem \ref{thm:schrijver-quasi-tensorization}. To upper bound the Schrijver number, we aim to find a good feasible solution to the dual of Schrijver SDP~\eqref{def:schrijver-sdp-primal}. Let $G = (V,E)$ be a graph, the dual of  Schrijver SDP~\eqref{def:schrijver-sdp-primal} on $G$ is as follows:
\begin{equation}\label{eq:schrijver-dual-2}
\begin{aligned}
    \min_{\lambda,A}\quad & \lambda\\
\text{s.t.}\quad
& \lambda I-A\succeq0 \\
& A_{vv}=1
\qquad \forall v\in V\\
& A_{uv}\ge1
\qquad \forall u\ne v,\ uv\notin E. 
\end{aligned}
\end{equation}
From~\cite{Schrijver1979}, strong duality holds for the Schrijver SDP. Before we delve into the proof, 
we first isolate a general dual-certificate lemma for the Schrijver number in terms of positive semidefinite kernels, which will be useful for the proof of Theorem~\ref{thm:schrijver-quasi-tensorization}.

\begin{lemma}\label{lem:general-schrijver-bound}
    Let \(G=(V,E)\) be a graph and \(K : V \times V \to \mathbb R\) be a kernel.
If there are real numbers \(0 \le \beta < \gamma\) and \(\delta\) such that
\(
K(v,v) \le \delta \ \text{for every } v \in V
\), $K-\gamma J \succeq 0$,
and
\(
K(u,v) \le \beta\ 
\text{whenever } u \ne v \text{ and } uv \notin E.
\)
Then, we have
\[
\vartheta'(G) \le \frac{\delta-\beta}{\gamma-\beta}.
\]
\end{lemma}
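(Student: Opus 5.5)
We will prove Lemma~\ref{lem:general-schrijver-bound} either by exhibiting a feasible point of the dual SDP~\eqref{eq:schrijver-dual-2} or, more directly, by weak duality against the primal~\eqref{def:schrijver-sdp-primal}; we take the direct route. Fix any primal-feasible $X$: $X\succeq 0$, $X\ge 0$ entrywise, $\Tr X=1$, and $X_{uv}=0$ on edges. We will bound $\langle J,X\rangle=\sum_{u,v}X_{uv}$ by $\frac{\delta-\beta}{\gamma-\beta}$, which gives the claim.

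\textbf{Step 1 (lower bound).} Since $K-\gamma J\succeq 0$ and $X\succeq 0$, we have $\langle K-\gamma J, X\rangle\ge 0$, so
\[
\langle K,X\rangle \ge \gamma\langle J,X\rangle .
\]

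\textbf{Step 2 (upper bound).} Split $\langle K,X\rangle$ into diagonal, edge and non-edge terms. Edge terms vanish because $X_{uv}=0$ there. On non-edges $X_{uv}\ge 0$ and $K(u,v)\le\beta$, so these terms are at most $\beta X_{uv}$. The diagonal terms contribute at most $\delta\sum_v X_{vv}=\delta$, using $X_{vv}\ge 0$ and $\Tr X=1$. The off-diagonal mass is supported on non-edges, so it equals $\langle J,X\rangle-1$. Hence
\[
\langle K,X\rangle\le \delta+\beta(\langle J,X\rangle-1).
\]

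\textbf{Step 3 (combine).} Putting Steps 1 and 2 together gives $\gamma\langle J,X\rangle\le \delta-\beta+\beta\langle J,X\rangle$. Dividing by $\gamma-\beta>0$ yields $\langle J,X\rangle\le\frac{\delta-\beta}{\gamma-\beta}$. Taking the supremum over feasible $X$ proves the bound.

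There is no real obstacle; the argument uses only weak duality. The one point to watch is that the entrywise nonnegativity of $X$, which is exactly the Schrijver constraint, is what allows us to bound the non-edge terms by $\beta X_{uv}$ even though $K$ is only bounded above there. The hypothesis $\beta\ge 0$ is not needed for this argument.
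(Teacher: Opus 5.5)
Your proof is correct, but it runs on the primal side, while the paper takes the dual route you mentioned and set aside. The paper sets $A_{vv}=1$, $A_{uv}=\frac{\gamma-K(u,v)}{\gamma-\beta}$ for $u\neq v$, and $\lambda=\frac{\delta-\beta}{\gamma-\beta}$. It checks that $A_{uv}\ge 1$ on non-edges and verifies the identity $\lambda I-A=\frac{1}{\gamma-\beta}(K-\gamma J)+\frac{1}{\gamma-\beta}(\delta I-\text{diag}(K))\succeq 0$, so that $(\lambda,A)$ is feasible for \eqref{eq:schrijver-dual-2}.

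The two proofs are the same weak-duality inequality read from opposite sides. Pairing the paper's certificate $\lambda I-A$ with a primal-feasible $X$, and using $A_{uv}\ge 1$ and $X_{uv}\ge 0$ on non-edges, reproduces your Steps 1--3. The ingredients are the same three: the PSD pairing, the diagonal bound $\delta$, and entrywise nonnegativity of $X$ on non-edges.

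Your version is more self-contained. It needs only the primal definition \eqref{def:schrijver-sdp-primal} and never has to justify that \eqref{eq:schrijver-dual-2} is the correct dual. (The paper cites strong duality there, although only weak duality is used.)

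The paper's version instead makes the correspondence between kernels and dual solutions explicit. That correspondence is run in reverse in the proof of Theorem~\ref{thm:schrijver-quasi-tensorization}, where optimal dual solutions $(\lambda_i,A_i)$ become the kernels $K_i=\lambda_i I-A_i+J$.

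Your observation that $\beta\ge 0$ is never used is correct, and it holds for the paper's argument as well.
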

\begin{proof}
    We aim to find a dual feasible solution to SDP~\eqref{eq:schrijver-dual-2}. Define a matrix $A\in \R^{V\times V}$ as follows:
\[
    A_{uv} = \begin{cases} 
1, & u = v \\ 
\frac{\gamma - K(u,v)}{\gamma - \beta}, & u\neq v 
\end{cases}.
\]
Let $\lambda = \frac{\delta - \beta}{\gamma - \beta}$.
We claim that \((\lambda, A)\) is feasible to the Schrijver SDP~\eqref{eq:schrijver-dual-2}. If $u \neq v$ and $uv\notin E$, we have 
\[
    A_{uv} = \frac{\gamma - K(u,v)}{\gamma - \beta}\geq \frac{\gamma - \beta}{\gamma - \beta} = 1. 
\]
By a simple calculation, we have that
\[
    \lambda I - A =\frac{1}{\gamma - \beta} (K - \gamma J) + \frac{1}{\gamma - \beta}(\delta I  - \text{diag}(K)). 
\]
The first matrix $K - \gamma J$ is positive semidefinite by assumption. The second matrix is positive semidefinite since $\delta\geq K(v,v)$ for all $v\in V$. Thus, $\lambda I - A\succeq 0$. Therefore, \((\lambda, A)\) is feasible to SDP~\eqref{eq:schrijver-dual-2} and we have
\[
\vartheta'(G)
\le
\lambda
=
\frac{\delta-\beta}{\gamma-\beta}.
\]
\end{proof}

At a high level, the proof of Theorem~\ref{thm:schrijver-quasi-tensorization} applies the OR polynomial in~\eqref{def:robust-or} to a positive semidefinite kernel from the Schrijver SDP on each graph. To do that, we define a compressor to regularize the input Schrijver kernels to the working range of the OR polynomial. Ideally, we want to cluster the negative inputs to the interval close to $-1$, capturing the behavior of a sign function.

As $\text{sgn}(x) = x\frac{1}{\sqrt{x^2}}$, we utilize the following lemma, which gives a rational approximation to $\frac{1}{\sqrt{x}}$ where both the numerator and denominator have low degree and nonnegative coefficients. It is related to sinc quadrature and a similar lemma also appears in, for example~\cite[Lemma 9]{liu2026smoothed}. We defer the proof of it to subsection \ref{subsec:sqrt_approx}.

\begin{lemma}
\label{lem:approx-sqrt-x}

There is an absolute constant \(c_0>0\) such that, for every
\(M\ge 2\) and \(0<\tau\le 1\), there are polynomials
\(F,G\in\mathbb{R}_{\ge 0}[x]\) satisfying
\[
\deg F,\deg G
\le
c_0\log\left(\frac{M}{\tau}\right),
\qquad
G(x)>0
\quad
(x\ge 0),
\]
and
\[
\frac{7}{8}
\le
\sqrt{x}\frac{F(x)}{G(x)}
\le
\frac{9}{8}
\]
for every \(x\in[\tau^2,{M^2}]\).

\end{lemma}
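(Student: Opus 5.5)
The plan is to use the sinc-quadrature (trapezoidal) discretization of the integral representation
\[
\frac{1}{\sqrt{x}} = \frac{2}{\pi}\int_{0}^{\infty}\frac{dt}{x+t^2} = \frac{2}{\pi}\int_{-\infty}^{\infty}\frac{e^{s}\,ds}{x+e^{2s}},
\]
obtained by the substitution $t=e^s$. For $x>0$ the integrand $g_x(s)=e^s/(x+e^{2s})$ is positive, decays like $e^{s}/x$ as $s\to-\infty$ and like $e^{-s}$ as $s\to+\infty$, and extends analytically to the strip $|\operatorname{Im} s|<\pi/4$. Indeed, $x+e^{2s}=0$ forces $\operatorname{Im}(2s)\equiv\pi \pmod{2\pi}$, i.e. $|\operatorname{Im} s|\ge \pi/2$, so any fixed strip of half-width $d<\pi/2$ works.

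First, we rescale by writing $x=e^{2u}$. Then $\sqrt{x}\,g_x(s)=\frac{1}{2\cosh(s-u)}$, so $\sqrt{x}\cdot\frac{2}{\pi}\int g_x = \frac{1}{\pi}\int \operatorname{sech}(s-u)\,ds=1$. This is a translation-invariant problem in $u$, with $u\in[\log\tau,\log M]$.

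Next, we apply the trapezoidal rule with step $h$ on a truncated range:
\[
S(x)=\frac{2h}{\pi}\sum_{k=-N_1}^{N_2}\frac{e^{kh}}{x+e^{2kh}}.
\]
The standard sinc-quadrature error bound for functions analytic and integrable in a strip of half-width $d$ gives a discretization error of $O(e^{-2\pi d/h})$ relative to the integral. Here $\operatorname{sech}$ is analytic in $|\operatorname{Im}|<\pi/2$, uniformly in $u$. Fixing a constant $h$, say $h=1$, we want $\frac1\pi\sum_k h\operatorname{sech}(kh-u)$ to lie within $1/16$ of $1$ for all real $u$; this follows from the Poisson summation formula, since the Fourier transform of $\operatorname{sech}$ decays like $e^{-\pi^2/(2h)\cdot}$, and if needed we shrink $h$ to a smaller absolute constant. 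The truncation tails are $\sum_{kh<\log\tau - L}$ and $\sum_{kh>\log M+L}$ of $\operatorname{sech}(kh-u)$. These are $O(e^{-L})$ uniformly for $u\in[\log\tau,\log M]$, so a constant $L$ makes them at most $1/16$. The number of terms is then
\[
N=N_1+N_2+1=O\bigl((\log M-\log\tau)/h + L/h\bigr)=O\bigl(\log(M/\tau)\bigr),
\]
using $M\ge2$ and $\tau\le1$. This yields $\sqrt{x}\,S(x)\in[7/8,9/8]$ on $[\tau^2,M^2]$.

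Finally, $S(x)=\sum_k w_k/(x+a_k)$ with $w_k>0$ and $a_k=e^{2kh}>0$. We take $G(x)=\prod_k(x+a_k)$ and $F(x)=\sum_k w_k\prod_{j\ne k}(x+a_j)$. Both have nonnegative coefficients, $\deg G=N$, $\deg F=N-1$, $G>0$ on $x\ge0$, and $F/G=S$. Then $c_0$ absorbs the constants.

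The main obstacle is making the quadrature error bound uniform in the shift $u$ with explicit constants. I would handle this directly via Poisson summation: $h\sum_k\operatorname{sech}(kh-u)=\pi\sum_m \widehat{\operatorname{sech}}(2\pi m/h)e^{2\pi i m u/h}$, where $\widehat{\operatorname{sech}}(\xi)=\pi\operatorname{sech}(\pi\xi/2)$. The $m\neq0$ terms are bounded by $2\pi\sum_{m\ge1}\operatorname{sech}(\pi^2 m/h)$, which is tiny already for $h=1$. This avoids general strip-analyticity machinery.
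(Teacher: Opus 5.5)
Your proposal is correct and is essentially the paper's proof. It uses the same sinc-quadrature nodes $e^{2kh}$ and weights, the same $G=\prod_k(x+a_k)$ and $F=\sum_k w_k\prod_{j\neq k}(x+a_j)$, the same substitution $x=e^{2u}$ reducing everything to $\frac{h}{\pi}\sum_k\operatorname{sech}(kh-u)$, and a geometric tail bound for the truncation.

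The one difference is the discretization estimate. You use Poisson summation, which gives error at most $2\sum_{m\ge1}\operatorname{sech}(\pi^2m/h)$, already about $2\cdot 10^{-4}$ at $h=1$. The paper instead uses the elementary Riemann-sum bound $\bigl|h\sum_k f(kh)-\int f\bigr|\le h\operatorname{Var}(f)=2h$ with $h$ a small absolute constant. Either works, since only the constant accuracy $1/8$ is needed.

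One small slip: your displayed identity has a stray factor of $\pi$ and should read $h\sum_k\operatorname{sech}(kh-u)=\pi\sum_m\operatorname{sech}(\pi^2m/h)e^{2\pi i m u/h}$. Your bound on the $m\neq0$ terms is nonetheless correct.
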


With lemma~\ref{lem:approx-sqrt-x} in hand, we will proceed to the proof of Schrijver quasi-tensorization.

\begin{proof}[Proof of Theorem \ref{thm:schrijver-quasi-tensorization}]
    For each $i\in [r]$, pick $(\lambda_i, A_i)$ to be an optimal solution of the dual Schrijver SDP~\eqref{eq:schrijver-dual-2} on $G_i$. Let $u,v\in V_i$, we define a kernel $K_i:V_i\times V_i\to \R$ as
    \[
    K_i(u,v) = \begin{cases}
        \lambda_i, & u = v \\ 1 - (A_i)_{uv}, & u\neq v 
    \end{cases}.
    \]
Equivalently, $K_i = \lambda_i I - A_i + J.$ Since $(\lambda_i, A_i)$ is feasible to SDP~\eqref{eq:schrijver-dual-2}, we have $K_i - J = \lambda_i I - A_i \succeq 0$.

Denote $G_1\boxtimes \ldots \boxtimes G_r$ by $G$ and $V_1\times \ldots \times V_r$ by $V$.
For $x = (x_1, \ldots, x_r), y = (y_1,\ldots,y_r) \in V$ and $i\in [r]$, 
we define a kernel $\productkernel{i}: V \times V\to\R{}$ as
\( \productkernel{i}(x,y) = K_i(x_i,y_i)
\). Since $K_i - J\succeq 0$, we have
\[
    \productkernel{i} - J = J_{V_1} \otimes \cdots\otimes J_{V_{i-1}} \otimes (K_{i} - J_{V_i}) \otimes J_{V_{i+1}} \otimes \cdots \otimes J_{V_r} \succeq 0.
\] 
We also know that for any $x\in V$, $\productkernel{i}(x,x) = K_i(x_i,x_i) = \lambda_i$. For $x = (x_1, \ldots, x_r), y = (y_1,\ldots,y_r) \in V$ and $i\in [r]$, we now define a translated kernel $Z_i$ as
$Z_i(x,y) = 2\productkernel{i}(x,y) - 1$, which can equivalently be expressed in the matrix form as $Z_i = 2\productkernel{i} - J$. 

From Lemma~\ref{lem:approx-sqrt-x}, there are polynomials
\(F_i,H_i\in\mathbb{R}_{\ge 0}[x]\) satisfying
\(
\deg F_i,\deg H_i
\le
c_0\log\left(2\lambda_i\right)\) for some absolute constant \(c_0 > 0 \), \(
H_i(x)>0 \) for \(x\ge 0\)
and
\[
\frac{7}{8}
\le
\sqrt{x}\frac{F_i(x)}{H_i(x)}
\le
\frac{9}{8}
\]
for every \(x\in[1,4\lambda_i^2]\). Let $L$ be the smallest even integer such that \((\frac{1}{8})^L\le\frac1{8er}
\). Define
\[
T_i(x) = \left(1 + x\frac{F_i(x^2)}{H_i(x^2)}\right)^L - 1.
\]
Since $L$ is even, we have $T_i(x)\geq -1$ for all $x$. Let $Q_r$ be defined as in Lemma~\ref{lem:stren-or} and set 
\[
\mathcal{P}_r(z_1,\ldots,z_r) = \left(\prod_{i=1}^r H_i(z_i^2)^L\right) Q_{r}(T_1(z_1),\ldots,T_r(z_r)).
\] For $x = (x_1, \ldots, x_r), y = (y_1,\ldots,y_r) \in V$, we now define a kernel $K:V\times V\to \R$
as \[
K(x,y) = \mathcal{P}_r(Z_1(x,y),\ldots,Z_r(x,y)).
\]
We next show that the kernel $K$ satisfies the conditions in Lemma~\ref{lem:general-schrijver-bound} for some appropriately chosen $\gamma, \beta,\delta$.
Note that
\begin{equation}\label{eq:expansions}
    \begin{aligned} 
    \mathcal{P}_r(z_1,\ldots,z_r) &= \left(\prod_{i=1}^r H_i(z_i^2)^L\right) Q_{r}(T_1(z_1),\ldots,T_r(z_r))\\
        &= \sum_{\emptyset\neq S\subseteq[r]}
        (1-(1-\frac{1}{r})^{|S|})
        \prod_{i\in S} \left((H_i(z_i^2) + z_i F_i(z_i^2))^L - H_i(z_i^2)^L\right)
        \prod_{i\notin S} H_i(z_i^2)^L.
\end{aligned}
\end{equation}
Since polynomials $H_i,F_i$ have nonnegative coefficients,
$\mathcal P_r$ has the following expansion
\[
        \mathcal P_r(z_1,\ldots,z_r)
        =
        \sum_{\alpha\in \mathbb{Z}_{\geq 0}^r} c_\alpha
        z_1^{\alpha_1}\cdots z_r^{\alpha_r},
        \qquad
        c_\alpha\ge0.
\]
Therefore, we have
\begin{align*}
    K &= \sum_\alpha c_\alpha Z_1^{\circ\alpha_1}\circ \cdots \circ Z_r^{\circ \alpha_r} \\
    &= \sum_\alpha c_\alpha \left( J +  2(\productkernel{1} - J) \right)^{\circ\alpha_1} \circ \cdots \circ \left( J +  2(\productkernel{r} - J) \right)^{\circ \alpha_r}\\
    &= \sum_\alpha c_\alpha \bigcirc_{i = 1}^r \left( J + \sum_{j = 1}^{\alpha_i} \binom{\alpha_i}{j} \left(  2(\productkernel{i} - J) \right)^{\circ j}\right).
\end{align*}
Since $\left( J +  2(\productkernel{i} - J) \right)^{\circ\alpha_i} = \left( J + \sum_{j = 1}^{\alpha_i} \binom{\alpha_i}{j} \left(  2(\productkernel{i} - J) \right)^{\circ j}\right)$ and $2(\productkernel{i} - J) \succeq 0$, by letting $\gamma = \sum_\alpha c_\alpha$, it follows that
\begin{equation}\label{eq:gamma-condition}
    K - \gamma J \succeq 0.
\end{equation}

Pick any $x = (x_1, \ldots, x_r), y = (y_1,\ldots,y_r)\in V$ such that $x\neq y, xy\notin E$. By definition of strong product, there is a coordinate $j$ such that $x_j \neq y_j$ and $x_jy_j\notin E_j$. Then $Z_j(x,y) = 2\productkernel{j}(x,y) - 1 = 2K_j(x_j,y_j) - 1 \leq -1$. Also since $Z_j-J = 2(\productkernel{j} - J)\succeq 0$, we have 
\(|Z_j(x,y)| \leq Z_j(x,x) = 2\lambda_j - 1\). Therefore, $Z_j(x,y)^2 \in [1,4\lambda_j^2]$. By Lemma~\ref{lem:approx-sqrt-x} and the definition of $T$, we have $T_j(Z_j(x,y)) \in [-1, -1 + (\frac{1}{8})^L]$ and 
\[
Q_r(T_1(Z_1(x,y)),\ldots,T_r(Z_r(x,y))) \leq 0.
\]
Since \(\prod_{i=1}^r H_i(z_i^2)^L\) is always strictly positive, we have
\begin{equation}\label{eq:beta-condition}
    K(x,y)\leq 0.
\end{equation}
Finally we bound the diagonal term of the kernel $K$. Let $d_i$ denote $\max (\deg(F_i), \deg(H_i))
$ and $D_i$ denote the degree of $\mathcal{P}_r$ in variable $z_i$, then by the expansion in~\eqref{eq:expansions}, we have
\[
D_i \leq L(2d_i + 1) \leq C' \log (r)\log(2\lambda_i)
\]
for some absolute constant $C' >0$. Thus, we have
\begin{equation}\label{eq:delta-condition}
    K(x,x) = \mathcal P_r(Z_1(x,x),\ldots, Z_r(x,x)) = \sum_\alpha c_\alpha Z_1(x,x)^{\alpha_1}\cdots Z_r(x,x)^{\alpha_r} \leq (\sum_\alpha c_\alpha ) \prod_{i=1}^r (2\lambda_i)^{D_i}
\end{equation}
Then, apply Lemma~\ref{lem:general-schrijver-bound} by picking $\gamma = \sum_\alpha c_\alpha, \beta = 0, \delta = (\sum_\alpha c_\alpha ) \prod_{i=1}^r (2\lambda_i)^{D_i}$, we have
\[
\vartheta'(G)\leq \frac{(\sum_\alpha c_\alpha ) \prod_{i=1}^r (2\lambda_i)^{D_i}}{\sum_\alpha c_\alpha} = \prod_{i=1}^r (2\lambda_i)^{D_i}\leq \prod_{i=1}^r \vartheta'(G_i)^{C\log r \log \vartheta'(G_i)}
\]
for some absolute constant $C > 0$.

\end{proof}

\begin{remark}\label{rem:quasipolynomial-dependency}
    The quasipolynomial dependency on $\vartheta'(G_i)$ in Theorem~\ref{thm:schrijver-quasi-tensorization} comes from the fact that the polynomials $F_i,H_i\in \R_{\geq 0}[x]$ in the compressor have degree $O(\log \vartheta'(G_i))$.
    The $\log r$ dependency in the exponent comes from the choice of the power $L$ in the compressor. 
\end{remark}

\subsection{Proof of Corollary \ref{cor:Mr(n)}}

In this section, we present the proof of Corollary \ref{cor:Mr(n)}. The strategy is a straightforward application of Theorem \ref{thm:schrijver-quasi-tensorization}. We first bound the Schrijver number in one dimension, and then leverage the quasi-tensorization Schrijver bound to prove a quasipolynomial bound for general $r$-way acute-free families.

The one-coordinate certificate is standard in spherical-code theory; see, for example,~\cite[Example 4.5]{DelsarteGoethalsSeidel1977}. We include it here for completeness and also to demonstrate the general connection between positive semidefinite kernels and Schrijver bounds.
We first define a function $q(t) = n(t + t^2)$ and a kernel $K:\sphere \times \sphere \to \R$ as follows:
\begin{equation}\label{eq:sphere-kernel}
    K(x,y) = q(\ip{x}{y}).
\end{equation}
Let $\calF = \{\x{1},\ldots,\x{k}\}\subseteq \sphere$ be any finite set of points on the unit sphere. We define a graph $G_\calF$, whose vertex set is $\calF$ and $u,v\in \calF$ is connected if $\ip{u}{v} > 0$. The next lemma shows that the kernel in~\eqref{eq:sphere-kernel} gives $\vartheta'(G_\calF)\leq 2n$. 
\begin{lemma}\label{lem:sphere-one-coordinate}
    Let $\calF = \{\x{1},\ldots,\x{k}\}\subseteq \sphere$ be any finite set of points on the unit sphere and $G_\calF$ be a graph on $\calF$ such that $u,v\in \calF$ is connected if $\ip{u}{v} > 0$.
    Then, we have
    $$\vartheta'(G_\calF)\leq 2n.$$
\end{lemma}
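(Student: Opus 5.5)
We apply Lemma~\ref{lem:general-schrijver-bound} to the kernel $K(u,v)=q(\ip{u}{v})=n(\ip{u}{v}+\ip{u}{v}^2)$, restricted to $\calF\times\calF$. We will take $\gamma=1$, $\beta=0$ and $\delta=2n$, which gives $\vartheta'(G_\calF)\le \frac{2n-0}{1-0}=2n$.

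The diagonal and non-edge conditions are immediate. On the diagonal, $\ip{u}{u}=1$, so $K(u,u)=q(1)=2n=\delta$. For a non-edge $u\ne v$ we have $t=\ip{u}{v}\in[-1,0]$, and then $q(t)=nt(1+t)\le 0=\beta$.

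The main step is to show $K-J\succeq 0$ on $\calF$. We expand
\[
K-J = n\,\big(\ip{u}{v}\big)_{u,v} + \Big( n\,\ip{u}{v}^2 - 1 \Big)_{u,v}.
\]
The Gram matrix $\big(\ip{u}{v}\big)_{u,v}$ is PSD, so it suffices to show that $\big(n\ip{u}{v}^2-1\big)_{u,v}\succeq 0$. Since $\ip{u}{v}^2=\ip{uu^T}{vv^T}_F$, this matrix is the Gram matrix of the vectors $w_u=\sqrt n\,(uu^T-\tfrac1n I)$:
\[
\ip{w_u}{w_v}_F = n\Big(\ip{u}{v}^2-\tfrac{2}{n}+\tfrac{1}{n}\Big) = n\ip{u}{v}^2-1,
\]
where we used $\Tr(uu^T)=1$ and $\ip{I}{I}_F=n$. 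It is therefore PSD.

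Hence $K-J\succeq0$ with $\gamma=1$. All three hypotheses of Lemma~\ref{lem:general-schrijver-bound} hold with $0=\beta<\gamma=1$, and the bound $\vartheta'(G_\calF)\le 2n$ follows. The only nonroutine step is finding the decomposition above; equivalently, one is verifying that $q$ has a nonnegative Gegenbauer expansion with constant coefficient at least $1$.
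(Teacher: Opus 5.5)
Your proof is correct and is essentially the paper's argument: the same kernel $q(t)=n(t+t^2)$, the same parameters $\gamma=1$, $\beta=0$, $\delta=2n$ in Lemma~\ref{lem:general-schrijver-bound}, and the same splitting of $K-J$ into the Gram matrix plus $\bigl(n\langle u,v\rangle^2-1\bigr)$. The only difference is cosmetic: the paper proves the second part PSD via the Cauchy--Schwarz bound $\Tr(B)^2\le n\|B\|_F^2$ for $B=\sum_i a_i x^{(i)}x^{(i)\top}$, while your Gram vectors $\sqrt n\,(uu^\top-\frac1n I)$ give the equivalent identity $n\|B\|_F^2-\Tr(B)^2=n\|B-\frac{\Tr B}{n}I\|_F^2$ (and you also state explicitly that $\langle u,v\rangle\ge -1$ is what makes $q\le 0$ on non-edges).
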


\begin{proof}
    Let $K_\calF$ be the matrix $\{K_{\x{i},\x{j}}\}_{i,j = 1}^k$. We try to show that $K_\calF$ satisfies the condition in Lemma~\ref{lem:general-schrijver-bound} for some $\gamma = 1, \delta = 2n, \beta = 0$. We first show that $K_\calF - J\succeq 0$. This is equivalent to showing
    \[
        a^\top (K_\calF - J) a = n\sum_{i,j} a_ia_j\ip{\x{i}}{\x{j}} + n\sum_{i,j} a_ia_j\left(\ip{\x{i}}{\x{j}}\right)^2 - \left(\sum_i a_i\right)^2\geq 0
    \]
    for all $a = (a_1,\ldots, a_k)\in\R^k$. The first term is nonnegative because $$\sum_{i,j} a_ia_j\ip{\x{i}}{\x{j}} = \norm{\sum_i a_i\x{i}}_2^2\geq 0.$$
    Let $B = \sum_i a_i \x{i}x^{(i)^\top}$, then we have $\sum_{i,j} a_ia_j\left(\ip{\x{i}}{\x{j}}\right)^2 = \norm{B}_F^2$. Since $\norm{\x{i}}_2 = 1$, we have $\Tr(B) = \sum_i a_i.$ Thus, by Cauchy-Schwarz inequality, we have
    \[
        \left(\sum_i a_i\right)^2 = \Tr(B)^2 \leq \|B\|_F^2\|I\|_F^2 = n\norm{B}_F^2 = n\sum_{i,j} a_ia_j\left(\ip{\x{i}}{\x{j}}\right)^2. 
    \]
    Thus, $a^\top (K_\calF - J) a \geq 0$ for all $a\in\R^{k}$ and $K_\calF - J \succeq 0$. 

    For all $i\neq j\in[k]$ satisfying $\x{i}\x{j}\notin E(G_\calF)$, we have $\ip{\x{i}}{\x{j}} \leq 0$. Therefore $K(\x{i},\x{j}) = q(\ip{\x{i}}{\x{j}}) \leq 0$. Also, note that for all $i\in [k]$, we have $K(\x{i},\x{i}) = q(1) = 2n$. Applying Lemma~\ref{lem:general-schrijver-bound} with $\gamma = 1,\delta = 2n,\beta = 0$ gives
    \(
    \vartheta'(G_\calF) \leq 2n.
    \)
\end{proof}

Now we use Theorem~\ref{thm:schrijver-quasi-tensorization} to prove Corollary~\ref{cor:Mr(n)}.

\begin{proof}[Proof of Corollary~\ref{cor:Mr(n)}]
    Pick any largest $r$-way acute-free set $\calD = \{\mathbf {x}^{(1)},\ldots,\mathbf{x}^{(k)} \}\subseteq (\sphere)^r$. For all $i\in [r]$ we define $\calF_i$ to be the set which each element is the $i$-th coordinate of each element in $\calD$ and $G_{\calF_i}$ to be a graph on $\calF_i$ such that $u,v\in \calF_i$ is connected if $\ip{u}{v} > 0$. We claim that, for all $i\in [r]$, $G_{\calF_i}$ is not a  complete graph and thus $\vartheta'(G_{\calF_i}) \geq 2$. Suppose, for contradiction,
that $G_{\calF_i}$ is complete for some $i$. Then the $i$-th coordinate of the vectors in $\calD$ will never
witness the non-acuteness condition: for any two distinct points of $\calD$,
the $i$-th coordinate inner product is positive. Now form
\[
\calD'=\{(x_1,\ldots,-x_i,\ldots,x_r):(x_1,\ldots,x_i,\ldots,x_r)\in\calD\}.
\]
The union $\calD\cup\calD'$ is still
$r$-way acute-free, which contradicts the maximality of $\calD$.
  Then, by Lemma~\ref{lem:sphere-one-coordinate}, for every $i\in [r]$ we have
$$\vartheta'(G_{\calF_i})\leq 2n.$$
    Applying Theorem~\ref{thm:schrijver-quasi-tensorization} gives
    $$
    \vartheta'(G_{\calF_1} \boxtimes \cdots \boxtimes G_{\calF_r}) \leq (2n)^{C_0r\log r\log(2n)}  $$
    for some absolute constant $C_0 > 0$.
    Then, by definition of $r$-way acute-free set, $\calD$ is an independent set in $G_{\calF_1} \boxtimes \cdots \boxtimes G_{\calF_r}$. Thus, we have $M_r(n) = |\calD|\leq \vartheta'(G_{\calF_1} \boxtimes \cdots \boxtimes G_{\calF_r}) \leq (2n)^{C_0r\log r\log(2n)}$. The exact same proof holds for $M_r^{\pm}(n)$.
\end{proof}

\subsection{Proof of Lemma \ref{lem:approx-sqrt-x}} \label{subsec:sqrt_approx}

\begin{proof}

Fix \(\varepsilon=\frac18\) and choose constants \(h>0\) and
\(C_0>0\), depending only on \(\varepsilon\), such that

\begin{equation}
\label{eq:epsilon_choice_variable}
\frac{2h}{\pi}
+
\frac{4he^{-C_0}}{\pi(1-e^{-h})}
\le
\varepsilon.
\end{equation}
Additionally, set 
\[
m_0
=
\left\lceil
\frac{\log(M/\tau)+C_0}{h}
\right\rceil
\]
and define the polynomials \(F,G\in\mathbb{R}_{\ge 0}[x]\) as
follows:
\[
F(x)
=
\frac{2h}{\pi}
\sum_{k=-m_0}^{m_0}
e^{kh}
\prod_{\substack{\ell=-m_0\\ \ell\ne k}}^{m_0}
(x+e^{2\ell h}),
\]
\[
G(x)
=
\prod_{k=-m_0}^{m_0}(x+e^{2kh}).
\]
Note that $\deg F,\deg G
=
O\left(\log\left(\frac{M}{\tau}\right)\right).$
Since \(h\) and \(C_0\) are absolute constants, there is an
absolute constant \(c_0>0\) such that
\[
\deg F,\deg G
\le
c_0\log\left(\frac{M}{\tau}\right).
\]
Moreover, every factor \(x+e^{2kh}\) is strictly positive for
\(x\ge 0\), and therefore $G(x)>0$ whenever $x\ge 0$. We claim that the polynomials \(F\) and \(G\) provide the desired
approximation. By construction,
\[
\frac{F(x)}{G(x)}
=
\frac{2h}{\pi}
\sum_{k=-m_0}^{m_0}
\frac{e^{kh}}{x+e^{2kh}}.
\]
Perform the change of variables \(x=e^{2y}\). Since
\(x\in[\tau^2,M^2]\), we have $
y\in[\log\tau,\log M]$
and therefore $|y|
\le
\log\left(\frac{M}{\tau}\right).$
Thus,
\begin{align}
\label{eq:simp_expansion_variable}
\sqrt{x}\,\frac{F(x)}{G(x)}
&=
e^y\cdot
\frac{2h}{\pi}
\sum_{k=-m_0}^{m_0}
\frac{e^{kh}}{x+e^{2kh}}
\nonumber\\
&=
\frac{2h}{\pi}
\sum_{k=-m_0}^{m_0}
\frac{e^y e^{kh}}{e^{2y}+e^{2kh}}
\nonumber\\
&=
\frac{2h}{\pi}
\sum_{k=-m_0}^{m_0}
\frac{1}{e^{y-kh}+e^{kh-y}}
\nonumber\\
&=
\frac{h}{\pi}
\sum_{k=-m_0}^{m_0}
\operatorname{sech}(kh-y).
\end{align}
Now, observe that
\begin{align}
\label{eq:tv_approx_variable}
\left|
h\sum_{k\in\mathbb Z}\operatorname{sech}(kh-y)-\pi
\right|
&=
\left|
h\sum_{k\in\mathbb Z}\operatorname{sech}(kh-y)
-
\int_{\mathbb R}\operatorname{sech}(u-y)\,du
\right|
\nonumber\\
&\le
h\,\operatorname{Var}\!\left(
u\mapsto\operatorname{sech}(u-y)
\right)
\nonumber\\
&=
2h,
\end{align}
where the inequality follows from the Riemann-sum variation bound
\[
\left|
h\sum_{k\in\mathbb Z}f(kh)
-
\int_{\mathbb R}f(u)\,du
\right|
\le
h\,\operatorname{Var}(f),
\]
and the final equality follows because $
\operatorname{Var}\!\left(
u\mapsto\operatorname{sech}(u-y)
\right)
=
2$. Next we control the truncation error.

\begin{align}
\label{eq:truncation_variable}
h\sum_{|k|>m_0}\operatorname{sech}(kh-y)
&\le
2h\sum_{|k|>m_0}e^{-|kh-y|}
\qquad
(\operatorname{sech}(u)\le 2e^{-|u|})
\nonumber\\
&\le
2h\sum_{|k|>m_0}e^{-|k|h+|y|}
\qquad
(|kh-y|\ge |k|h-|y|)
\nonumber\\
&\le
2h e^{\log(M/\tau)}
\sum_{|k|>m_0}e^{-|k|h}
\qquad
\left(
|y|\le\log\left(\frac{M}{\tau}\right)
\right)
\nonumber\\
&=
4h e^{\log(M/\tau)}
\sum_{k=m_0+1}^{\infty}e^{-kh}
\nonumber\\
&=
\frac{
4h e^{\log(M/\tau)}e^{-(m_0+1)h}
}{
1-e^{-h}
}
\nonumber\\
&\le
\frac{4he^{-C_0}}{1-e^{-h}}
\qquad
\left(
m_0h\ge\log\left(\frac{M}{\tau}\right)+C_0
\right).
\end{align}

Combining \eqref{eq:simp_expansion_variable},
\eqref{eq:tv_approx_variable}, and
\eqref{eq:truncation_variable} gives the final result:

\begin{align*}
\left|
\sqrt{x}\,\frac{F(x)}{G(x)}-1
\right|
&=
\left|
\frac{h}{\pi}
\sum_{k=-m_0}^{m_0}
\operatorname{sech}(kh-y)
-
1
\right|
&&
\text{(by \eqref{eq:simp_expansion_variable})}
\\
&=
\frac{1}{\pi}
\left|
h
\sum_{k=-m_0}^{m_0}
\operatorname{sech}(kh-y)
-
\pi
\right|
\\
&\le
\frac{1}{\pi}
\left|
h
\sum_{k\in\mathbb Z}
\operatorname{sech}(kh-y)
-
\pi
\right|
+
\frac{h}{\pi}
\sum_{|k|>m_0}
\operatorname{sech}(kh-y)
\\
&\le
\frac{2h}{\pi}
+
\frac{4he^{-C_0}}{\pi(1-e^{-h})}
&&
\text{(by \eqref{eq:tv_approx_variable} and
\eqref{eq:truncation_variable})}
\\
&\le
\varepsilon
=
\frac18.
&&
\text{(by \eqref{eq:epsilon_choice_variable})}
\end{align*}
for every \(x\in[\tau^2,M^2]\).

\end{proof}

\section{Improved Multicolor Ramsey Bounds} \label{sec:ramsey}

In this section, we prove the improved geometric lemma (Lemma~\ref{lem:or-composed-geometric-lemma}) and Theorem~\ref{thm:improved-multicolor-ramsey}. We begin by designing the compressor.
The compressor in Theorem~\ref{thm:schrijver-quasi-tensorization} only works on bounded inputs. However, this is not the case for arbitrary mappings and thus does not apply to the proof of the geometric lemma. Moreover, the proof of the geometric lemma requires the growth rate of the compressor scales $\exp(O(\sqrt{x}))$ when $x \geq -1$. We define the following function:
\begin{equation}\label{def:B}
B(x)=\sum_{i=0}^{\infty}\frac{x^i}{(i!)^2}.
\end{equation}
For $x \geq 0$, $B(x) =I_0(2\sqrt x)$,
where $I_0$ is the modified Bessel function of the first kind of order zero.
For \(x<0\), \(B(x)=J_0(2\sqrt {-x})\), where \(J_0\) is the Bessel function of the first kind of order zero. Since $J_0(x)\to 0$ when $x\to \infty$, see, for example~\cite{neuman2004inequalities}, we have the following fact.

\begin{fact}\label{lem:bessel-damping}
There is an absolute constant \(A\ge1\) such that \(|J_0(2\sqrt a)|\le e^{-3}\) for every \(a\ge A\).
\end{fact}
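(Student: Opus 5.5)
The plan is to deduce the Fact from a quantitative form of the decay $J_0(x)\to 0$, not from the bare limit. The bare limit already implies the Fact: $\lim_{x\to\infty}J_0(x)=0$ means there is $x_0$ with $|J_0(x)|\le e^{-3}$ for all $x\ge x_0$, so $A=\max\{1,x_0^2/4\}$ works. Since the paper later wants $A$ to be an explicit absolute constant, I would make this concrete.

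First I check the identification $B(-a)=J_0(2\sqrt a)$ for $a\ge 0$. From \eqref{def:B},
\[
B(-a)=\sum_{i\ge 0}\frac{(-1)^i a^i}{(i!)^2}=\sum_{i\ge 0}\frac{(-1)^i}{(i!)^2}\Big(\frac{2\sqrt a}{2}\Big)^{2i},
\]
which is exactly the power series of $J_0$ evaluated at $2\sqrt a$. So the Fact is purely a statement about $J_0$ on $[2\sqrt A,\infty)$.

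Next I would invoke a classical uniform envelope bound. The standard inequality $|J_0(x)|\le \sqrt{2/(\pi x)}$ for $x>0$ (as in the cited literature, e.g.\ \cite{neuman2004inequalities}) gives
\[
|J_0(2\sqrt a)|\le \Big(\frac{1}{\pi\sqrt a}\Big)^{1/2}.
\]
This is at most $e^{-3}$ as soon as $\sqrt a\ge e^{6}/\pi$. So the explicit choice is $A=\max\{1,e^{12}/\pi^2\}$.

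If I want to avoid citing the sharp envelope, a self-contained fallback uses the integral representation
\[
J_0(x)=\frac{1}{\pi}\int_0^\pi\cos(x\sin\theta)\,d\theta .
\]
I would split the integral near the stationary point $\theta=\pi/2$ into a window of width $\delta$, which contributes at most $2\delta/\pi$. On the complement $|\cos\theta|\ge \sin\delta$, and integrating by parts (equivalently, van der Corput's first-derivative test) bounds that part by $O\big(1/(x\sin\delta)\big)$. Taking $\delta=x^{-1/2}$ gives $|J_0(x)|=O(x^{-1/2})$ with an explicit constant, and then $A$ is chosen as before.

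The only real obstacle is that this fallback needs its constant tracked, since the paper wants $A$ to be absolute. Beyond that, the argument is a one-line substitution once either the envelope bound or the $O(x^{-1/2})$ estimate is in hand.
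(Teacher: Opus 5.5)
Your argument is correct and matches the paper, which deduces the Fact from $J_0(x)\to 0$ as $x\to\infty$ (citing \cite{neuman2004inequalities}), exactly as in your first paragraph. The envelope bound $|J_0(x)|\le\sqrt{2/(\pi x)}$ holds for all $x>0$ (e.g.\ via Nicholson's formula), so your explicit $A=e^{12}/\pi^2$ is a valid optional refinement; it is not needed, though, because the bare limit already gives an absolute constant, since $J_0$ depends on no parameters.
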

 Let \(A\) be the absolute constant as in Fact~\ref{lem:bessel-damping}.  For \(r\ge2\), choose $m=\left\lceil \frac16\log(8er)\right\rceil$ and set \(\Psi(x)=B(x)^{2m}\). We define the compressor $T(x)$ as
\begin{equation}\label{def:T}
T(x)=\Psi(Ax)-1.
\end{equation}

\begin{lemma} \label{lem:T}
Let $r\geq 2$ be an integer, $A$ be defined as in Fact~\ref{lem:bessel-damping} and the compressor \(T(x)\) defined as in~\eqref{def:T}. $T(x)$ satisfies the following properties:
\begin{enumerate}
    \item \(T(x)\) has nonnegative Taylor coefficients and \(T(x)\geq -1\).
    \item \(T(x)\ge0\) when \(x\ge 0\) and \(T(x)\in[-1, -1+(8er)^{-1}]\) when \(x\le -1\).
    \item For $x\ge -1$, there exists an absolute constant $C_1$ such that
    \[1+T(x)\le \exp\bigl(C_1\log(r)\sqrt{x+1}\bigr).\]
\end{enumerate}
\end{lemma}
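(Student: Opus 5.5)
We verify the three properties of \(T(x)=B(Ax)^{2m}-1\) in turn. Each follows from an elementary property of \(B\) together with Fact~\ref{lem:bessel-damping}.

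\emph{Property 1.} \(B\) has nonnegative Taylor coefficients \(1/(i!)^2\), and its series converges everywhere, so \(B\) is entire. Products of power series with nonnegative coefficients keep nonnegative coefficients, so \(\Psi=B^{2m}\) has them, and so does \(x\mapsto\Psi(Ax)\) since \(A>0\). Subtracting \(1\) changes only the constant term. That term is \(\Psi(0)-1=B(0)^{2m}-1=0\), so it is also nonnegative. Since \(2m\) is even, \(\Psi(Ax)=(B(Ax))^{2m}\ge 0\) for all real \(x\), which gives \(T\ge -1\).

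\emph{Property 2.} For \(x\ge0\), every term of the series is nonnegative and the constant term is \(1\). Hence \(B(Ax)\ge1\), so \(\Psi(Ax)\ge 1\) and \(T(x)\ge0\).

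For \(x\le -1\), set \(a=-Ax\ge A\). Then \(B(Ax)=J_0(2\sqrt a)\), and Fact~\ref{lem:bessel-damping} gives \(|B(Ax)|\le e^{-3}\). Therefore
\[
0\le \Psi(Ax)\le e^{-6m}.
\]
The choice \(m\ge \tfrac16\log(8er)\) gives \(e^{-6m}\le (8er)^{-1}\). Hence \(T(x)\in[-1,-1+(8er)^{-1}]\).

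\emph{Property 3.} We need a bound on \(|B|\) valid over all of \(x\ge -1\). Since every coefficient of \(B\) is nonnegative, \(|B(y)|\le B(|y|)\) for all real \(y\). Moreover, \((i!)^2\ge (2i)!/4^i\), so
\[
B(z)\le \sum_i \frac{(2\sqrt z)^{2i}}{(2i)!}=\cosh(2\sqrt z)\le e^{2\sqrt z}\qquad (z\ge 0).
\]

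On the region \(x\ge0\) we get \(1+T(x)=B(Ax)^{2m}\le e^{4m\sqrt{Ax}}\le e^{4m\sqrt A\sqrt{x+1}}\).

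On the region \(-1\le x<0\) we have \(|Ax|\le A\), so \(1+T(x)\le B(A)^{2m}\le e^{4m\sqrt A}\le e^{4m\sqrt A\sqrt{x+1}}\cdot e^{4m\sqrt A}\). To absorb the extra factor, we use the fact that \(|J_0|\le 1\) on the reals, which gives \(|B(y)|\le 1\) for \(y\le0\). Then \(1+T(x)\le 1\le \exp(\cdot)\) trivially on this region.

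In both regions the exponent is at most \(4m\sqrt A\,\sqrt{x+1}\). Since \(A\) is an absolute constant and \(m=O(\log r)\) for \(r\ge2\), this yields \(C_1=O(\sqrt A)\).

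The only nonformal step is controlling \(B\) on \([-A,0]\). Citing \(|J_0|\le1\) (or \(|J_0(t)|=|\tfrac1\pi\int_0^\pi\cos(t\sin\theta)\,d\theta|\le1\)) handles it cleanly. If one prefers to avoid this, the crude bound \(B(A)^{2m}\) also works, since \(\sqrt{x+1}\) is not bounded below there. So using \(|J_0|\le 1\) is the intended route, and I expect no real obstacle.
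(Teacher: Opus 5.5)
Your proof is correct and follows the paper's argument essentially step for step. The one variation is that you obtain $B(z)\le\cosh(2\sqrt z)\le e^{2\sqrt z}$ for $z\ge 0$ from $(i!)^2\ge(2i)!/4^i$ rather than from the integral representation of $I_0$, which is a perfectly good elementary substitute.

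Discard your closing aside, however. The crude bound $B(A)^{2m}\le e^{4m\sqrt A}$ does \emph{not} work on $[-1,0)$: as $x\to -1$ we have $\sqrt{x+1}\to 0$, so the target $\exp(C_1\log(r)\sqrt{x+1})$ tends to $1$ and cannot absorb a constant factor larger than $1$. The bound $|B(y)|=|J_0(2\sqrt{-y})|\le 1$ for $y\le 0$ is genuinely needed there, exactly as the paper uses it.
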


\begin{proof}
By definition of $B$ in~\eqref{def:B}, its Taylor coefficients are nonnegative. Therefore, the Taylor coefficients of \(\Psi\) are also nonnegative. Also note that the constant Taylor coefficient of $\Psi(x)$ is $1$ since the constant Taylor coefficient of $B(x)$ is $1$. This gives nonnegativity of all Taylor coefficients of \(T\). And since $\Psi(x) = B(x)^{2m} \geq 0$, we have $T(x) \geq -1$.

If \(x\ge 0\), then since all Taylor coefficients of $B$ are nonnegative and the constant Taylor coefficient of $B$ is $1$, we have \(B(Ax)\ge1\). Thus, we have \(T(x)
\ge0\).  If \(x\le-1\), then \(Ax\le -A\). Thus, by Fact~\ref{lem:bessel-damping}, we have \(\Psi(Ax)\le e^{-6m}\le(8er)^{-1}\), proving the stated bound.

It remains to bound the growth when $x\geq -1$. For \(x\ge0\), by the integral representation of $I_0(x)$ in~\cite{watson1944bessel}, we have $$B(Ax) = I_0(2\sqrt{Ax})=\frac{1}{\pi}\int_0^\pi e^{2\sqrt{Ax} \cos\theta}\,d\theta \le e^{2\sqrt{Ax}}.$$
For \(-1\le x\le0\), one has \(|B(Ax)|=|J_0(2\sqrt{|Ax|})|\le1\).  Therefore, if \(x\ge-1\), we have
\[
1+T(x)=\Psi(Ax) = B(Ax)^{2m}\le \exp(4m\sqrt{A(x+1)})\leq \exp(C_1\log(r) \sqrt{x+1} )
\] for some absolute constant $C_1$.
\end{proof}

We now prove a geometric lemma with better dependence on $r$ that replaces Lemma 3.1 of \cite{balister2024upperboundsmulticolorramsey}.

\begin{proof}[Proof of Lemma~\ref{lem:or-composed-geometric-lemma}]
By Lemma 3.2 of \cite{balister2024upperboundsmulticolorramsey}, for every \((a_1,\ldots,a_r)\in\mathbb Z_{\ge0}^r\), we have
\begin{equation} \label{eq:mm_pos}
\mathbb{E} \prod_{i=1}^r Z_i^{a_i}\ge0.
\end{equation}
The key improvement of our proof lies in our choice of a better special function than the one in~\cite{balister2024upperboundsmulticolorramsey}. In particular, we define
\[
P(x_1,\ldots,x_r)=2r^r Q_r(T(x_1),\ldots,T(x_r)),
\]
where $T$ is defined as in~\eqref{def:T}.
By Lemma \ref{lem:T} and the coefficient positivity of \(Q_r\), the entire function \(P\) has nonnegative Taylor coefficients. Since $X$ is finite, expanding termwise and using equation \eqref{eq:mm_pos} gives that \(\mathbb E P(Z_1,\ldots,Z_r)\ge0\).

Now, define the event \(E=\{Z_i\ge-1\text{ for all }i\}\) and a random variable \(M=\max_i Z_i\).  If \(E\) fails, then some \(Z_j<-1\).  Then, Lemma \ref{lem:T} gives \(T(Z_j)\in[-1,-1+(8er)^{-1}]\) and \(T(Z_i)\ge-1\) for every~\(i\).  Therefore, from Lemma \ref{lem:stren-or}, we know that \(Q_r(T(Z_1),\ldots,T(Z_r))\le -\frac12 r^{-r}\).  Hence \(P(Z)\le -1\) on~\(E^c\).
On the event \(E\), Lemma \ref{lem:T} gives
\[
1+T(Z_i)\le \exp\bigl(C_1\log(r)\sqrt{Z_i+1}\bigr).
\]
Since \(Q_r(t_1,\ldots,t_r) = \prod_{i=1}^r(1+t_i) - \prod_{i=1}^r(1+(1-\frac{1}{r})t_i)\le\prod_{i=1}^r(1+t_i)\) whenever \(t_i\ge-1\), it follows that on \(E\) we have
\[
P(Z)\le 2r^r\exp\bigl(C_1 r\log(r)\sqrt{M+1}\bigr).
\]
Thus, there are constants \(A_r\le\exp(C_3r\log(r))\) and \(C_r\le C_3r\log(r)\) such that \(P(Z)\le A_r e^{C_r\sqrt{M+1}}\) on \(E\), and \(P(Z)\le-1\) on \(E^c\).  Since \(\E P(Z)\ge0\), this gives
\begin{equation}\label{eq:prob-of-Ec}
A_r\E\bigl[e^{C_r\sqrt{M+1}}\1_E\bigr]\ge\Pr(E^c).
\end{equation}
Let \(\beta_r=(4A_r(1+rC_r))^{-1}\).  If \(\mathbb{P}(E)\ge\beta_r\), we can take \(\lambda=-1\) and conclude the proof. So, it suffices to consider the case that \(\mathbb{P}(E)\le\beta_r\). In this case, we have \(\mathbb{P}(E^c)>1/2\). Thus, rearranging \eqref{eq:prob-of-Ec} gives
\begin{equation} \label{eq:rearr}
\E\bigl[e^{C_r\sqrt{M+1}}\1_E\bigr]\ge(2A_r)^{-1}.
\end{equation}

We next prove that the theorem holds for \(C'_r=C_r+1\), then the proof can be concluded by enlarging $C_r$. Suppose for the sake of contradiction that for all $i\in [r]$ and $\lambda \geq -1$, we have
\[
\mathbb{P}\bigl[Z_i\ge\lambda\text{ and } Z_j\ge -1\text{ for every }j\ne i\bigr]
< \beta_r\exp\bigl(-C_r'\sqrt{\lambda+1}\bigr).
\]
Then by union bound, 
\begin{equation}\label{eq:union-bound}
    \mathbb{P}[M\ge\lambda, E]
< r\beta_r e^{-C'_r\sqrt{\lambda+1}}.
\end{equation}
We now compute as follows
\[
\begin{aligned}
\E\!\left[e^{C_r\sqrt{M+1}}\mathbf 1_E\right]
&=
\int_0^\infty
\Pr\!\left[
e^{C_r\sqrt{M+1}}\mathbf 1_E\ge s
\right]\,ds
\\
&=
\int_0^1
\Pr\!\left[
e^{C_r\sqrt{M+1}}\mathbf 1_E\ge s
\right]\,ds
+
\int_1^\infty
\Pr\!\left[
e^{C_r\sqrt{M+1}}\mathbf 1_E\ge s
\right]\,ds.
\end{aligned}\]
For the first integral, the event $\{ e^{C_r\sqrt{M+1}}\mathbf 1_E\ge s\}$ implies the event $E$. Thus,
\[
\int_0^1
\Pr\!\left[
e^{C_r\sqrt{M+1}}\mathbf 1_E\ge s
\right]\,ds\leq 
\Pr(E).\] 
For the second integral, we have
\[
\begin{aligned}
    \int_1^\infty
\Pr\!\left[
e^{C_r\sqrt{M+1}}\mathbf 1_E\ge s
\right]\,ds &= \int_1^\infty
\Pr\!\left[
M\ge \left(\frac{\log s}{C_r}\right)^2-1,\ E
\right]\,ds\\
&< \int_1^\infty
r\beta_r
\exp\!\left(
-C'_r\frac{\log s}{C_r}
\right)\,ds\\
&= r\beta_r C_r,
\end{aligned}
\]
where the inequality follows from~\eqref{eq:union-bound}. Combining the two bounds above, we have
\[
\E[e^{C_r\sqrt{M+1}}\1_E]
< \beta_r+r\beta_r C_r\le (4A_r)^{-1},
\]
which is a contradiction to~\eqref{eq:rearr}, concluding the proof.
\end{proof}

We now plug in our improved geometric lemma into the algorithm of \cite{balister2024upperboundsmulticolorramsey}. In particular, tracing their analysis with
the dependence on the geometric parameters tracked gives the following lemma.

\begin{lemma}
\label{lem:ramsey-bookkeeping}
Fix \(r\ge 2\). Suppose that~\eqref{eq:ramsey_lemma} holds with
parameters \(C_r\ge 1\) and \(0<\beta_r\le 1\) and there
exists a parameter \(\mathcal M_r\ge 2\) such that
\[
    C_r^2\le \mathcal M_r,
    \qquad
    \log\frac r{\beta_r}\le \mathcal M_r\log(2r),
    \qquad
    \log(4r\mathcal M_r^2)\le K_0\log(2r)
\]
for some absolute constant $K_0 > 0$.
Then, there are absolute constants $c,K > 0$ such that
\[
    R_r(k)
    \le
    \exp\!\left(-\frac{ck}{r^3\mathcal M_r^3}\right)r^{rk}
\]
for every $k\ge Kr^2\mathcal M_r^6.$
\end{lemma}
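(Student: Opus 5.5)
This is a bookkeeping statement: the book algorithm of \cite{balister2024upperboundsmulticolorramsey} is kept unchanged, and we trace where the constants of their geometric lemma (their Lemma 3.1, with $C_r=\Theta(r^{3/2})$ and their $\beta_r$) enter the final exponent. We then replace those constants by the abstract parameters $C_r,\beta_r$, bounded through the single quantity $\mathcal M_r$. The plan is to reproduce their argument in three layers, keeping every $r$-dependence symbolic.

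\textbf{Step 1: the density-boost step.} Their key step converts the geometric lemma into a dichotomy: on a surviving vertex set, either some color admits a ``big blue step'' (add to the spine), or there is a density increment of size $\lambda$ in color $i$ on a set of relative size $\beta_r e^{-C_r\sqrt{\lambda+1}}$. The vectors $\sigma_i$ are the normalized color-$i$ neighborhood deviation vectors.

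We rerun that computation with generic $(C_r,\beta_r)$, writing the boost $\lambda$ in units of $\varepsilon$ where $\lambda\varepsilon$ is the density gain. The cost of a boost is then $\log(1/\beta_r)+C_r\sqrt{\lambda+1}$ in $\log$-size, against a gain of order $\lambda\varepsilon$. Optimizing shows the relevant scale of $\lambda$ is governed by $C_r^2$, which we bound by $\mathcal M_r$. The additive cost $\log(r/\beta_r)$ is bounded by $\mathcal M_r\log(2r)$. So each boost step costs at most $O(\mathcal M_r\log(2r))$ in log-size, per unit of density gain times $\mathcal M_r$-dependent factors.

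\textbf{Step 2: the potential argument.} Next we redo their accounting. The number of boost steps is limited because each color density is bounded, and the number of spine steps is at most $rk$. The total loss in $\log|W|$ is compared to the $r^{rk}$ baseline coming from the $1/r$ density. The gain $\exp(-ck/(r^3\mathcal M_r^3))$ arises from choosing the step parameter $\varepsilon\approx 1/(r\mathcal M_r)$. The three factors of $r\mathcal M_r$ come from three sources:
\begin{itemize}
\item the boost threshold;
\item the number of boosts per color;
\item the per-step density loss, which must stay below the $\varepsilon$-gain.
\end{itemize}
This is exactly the $r^{12}=r^3(r^{3})^3$ structure in their paper when $\mathcal M_r\sim r^3$.

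The hypothesis $\log(4r\mathcal M_r^2)\le K_0\log(2r)$ is used to absorb logarithmic terms, which arise from union bounds over colors and from discretizing $\lambda$, into $\log(2r)$ factors. The lower bound $k\ge Kr^2\mathcal M_r^6$ ensures that error terms of size $\mathrm{poly}(r\mathcal M_r)$ (rounding, the initial clean-up, lower-order terms of $\binom{k}{t}$ counts) are negligible against $k/(r^3\mathcal M_r^3)$.

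\textbf{Step 3: deduce the Ramsey bound.} Finally we conclude via the standard induction on $(k,\dots,k)$ off-diagonal Ramsey numbers, as in their final section.

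\textbf{Main obstacle.} The hard part is Step 2: confirming that every occurrence of $r^{3/2}$ and of $\log(1/\beta_r)$ in their proof enters only through $C_r^2$ and $\log(r/\beta_r)/\log(2r)$, so that bounding both by $\mathcal M_r$ loses nothing. If $C_r$ appears elsewhere in the proof, I would first try to dominate it by $\mathcal M_r^{1/2}\le\mathcal M_r$ and check whether the exponent $3$ survives.
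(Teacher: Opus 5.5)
Your strategy (keep the book algorithm of \cite{balister2024upperboundsmulticolorramsey} and propagate generic $(C_r,\beta_r)$) is the paper's. But this lemma \emph{is} the bookkeeping, and you defer all of it (``we rerun that computation''). The concrete claims you do make about how the parameters propagate do not match the argument, so the plan establishes neither the exponent $r^3\mathcal M_r^3$ nor the range of $k$.

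\textbf{The missing book theorem.} What is missing is a parametrized book theorem (Lemma~\ref{lem:param-book}). Run the algorithm with $\delta=p/\mu^2$ and boost threshold $\lambda_0=(\mu\log(1/\delta)/(8C_r))^2$. Then Lemmas 4.1--4.6 of \cite{balister2024upperboundsmulticolorramsey} yield a monochromatic $(t,m)$-book provided
\begin{itemize}
\item $\mu\ge 2^{12}C_r^2$ and $\log(r/\beta_r)\le\frac{\mu}{16}\log(\mu^2/p)$,
\item $t\ge\mu^5/p^2$,
\item $|X|\ge(\mu^2/p)^{\mu rt}$ and $|Y_i|\ge(e^{2^9C_r^2/\mu^2}/p)^t m$.
\end{itemize}
The condition $\mu\ge 2^{12}C_r^2$ makes the reservoir loss from boost steps, $\exp(-C_r\sum_j\sqrt{\lambda(j)+1})\ge\exp(-2^7C_r^2rt/\mu)$, affordable. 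The hypothesis on $\beta_r$ controls the factor $\eta^{2rt}$.

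With $\mu=A\mathcal M_r$, the reservoir requirement $(\mu^2/p)^{\mu rt}\ge r^{A\mathcal M_r rt}$ exceeds $r^{rk}$ unless the spine has size $t=O(k/\mathcal M_r)$; the paper takes $t=\lfloor bk/\mathcal M_r\rfloor$. This is where $\log(4r\mathcal M_r^2)\le K_0\log(2r)$ enters, not union bounds or discretizing $\lambda$: it gives $(\mu^2/p)^{\mu rt}\le r^{rk/4}$. So your count of ``at most $rk$ spine steps'' points the wrong way. One builds a book with a short spine $t$ and pages of size $m=R_r(k,\dots,k,k-t)$, and finishes with $R_r(k,\dots,k,k-t)\le e^{-t^2/6k}r^{rk-t}$.

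\textbf{The source of the exponent.} Your account of the exponent is also wrong. The factor $r^3\mathcal M_r^3$ is not three separate losses inside the algorithm. (Also, the density gain per boost is $\lambda\alpha_\ell(s)$ with $\alpha_\ell(s)=(p_\ell(s)-p_0+\delta)/t$, not $\lambda\varepsilon$.) It is $\varepsilon^{-3}$, where $\varepsilon=a/(r\mathcal M_r)$ is the regularity parameter of the clean-up lemma (Lemma 5.2 of \cite{balister2024upperboundsmulticolorramsey}), which your plan never invokes.
\begin{itemize}
\item If the clean-up spines have total size $s\ge\varepsilon^2k$, Lemma 5.3 saves only $e^{-\varepsilon^3k/2}$. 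This is the binding case.
\item Otherwise one runs the book theorem on $W$ with $p=1/r-2\varepsilon$. The losses $e^{-4\varepsilon rt}$ (from $p<1/r$) and $((1+\varepsilon)/r)^s\ge e^{-\varepsilon rt}$ (as $s<\varepsilon^2k\le\varepsilon t$) must be beaten by the gain $e^{t^2/8k}$ with $t\approx bk/\mathcal M_r$. This forces $\varepsilon r=O(1/\mathcal M_r)$.
\end{itemize}
The chain is thus $\mu\sim\mathcal M_r\Rightarrow t\sim k/\mathcal M_r\Rightarrow\varepsilon\sim 1/(r\mathcal M_r)\Rightarrow$ saving $\varepsilon^3k$.

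Likewise, $k\ge Kr^2\mathcal M_r^6$ is not about making $\mathrm{poly}(r\mathcal M_r)$ errors negligible. It is exactly the book theorem's requirement $t\ge\mu^5/p^2$, where $\mu^5/p^2\le 4A^5r^2\mathcal M_r^5$, combined with $t\approx bk/\mathcal M_r$. What you call the main obstacle is the entire content of the lemma, and the heuristic you offer for it points at the wrong places.
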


We note that the proof of Lemma~\ref{lem:ramsey-bookkeeping} follows directly from \cite{balister2024upperboundsmulticolorramsey} with no new ingredients. As such, we defer the proof to the appendix. Now, the proof of Theorem \ref{thm:improved-multicolor-ramsey} follows immediately. 

\begin{proof}[Proof of Theorem~\ref{thm:improved-multicolor-ramsey}]
By Lemma~\ref{lem:or-composed-geometric-lemma}, equation~\eqref{eq:ramsey_lemma}
holds with \(C_r\le C r\log(r)\) and
\(\beta_r\ge \exp\{-C r\log(r)\}\), where \(C>0\) is an absolute constant.
Set $\mathcal M_r=K_1 r^2(\log(2r))^2,$ where \(K_1\) is a fixed sufficiently large absolute constant. Then
\[
    C_r^2\le C^2r^2(\log(r))^2\le \mathcal M_r,
\]
\[
    \log\frac r{\beta_r}
    \le \log r+Cr\log(r)
    \le K_1r^2(\log(2r))^3
    =\mathcal M_r\log(2r),
\]
and
\[
    \log(4r\mathcal M_r^2)
    =
    \log\!\left(4K_1^2r^5(\log(2r))^4\right)
    \le K_0\log(2r)
\]
for an absolute constant \(K_0\). Thus, Lemma~\ref{lem:ramsey-bookkeeping}
gives
\[
    R_r(k)
    \le
    \exp\!\left(-\frac{c k}{r^3\mathcal M_r^3}\right)r^{rk} \le 
    \exp\!\left(
        -\frac{c k}{r^9(\log(2r))^6}
    \right)r^{rk}
\]
for every \(k\ge K r^2\mathcal M_r^6 \ge C_0r^{14}(\log(2r))^{12}.\)
\end{proof}

\section{Conclusion} \label{sec:conclusion}

In this paper, we introduced a robust polynomial framework for OR-type combinatorial problems. The general workflow is summarized in Figure~\ref{fig:or-polynomial-proof-strategy}. Through a carefully designed compressor, the framework takes in one-coordinate positive semidefinite certificates, compresses the input, and composes with the robust OR polynomial~\eqref{def:robust-or} 
to produce PSD certificates for OR-type problems. 

We then demonstrated the power of our framework by two concrete applications. The first one is a new quasi-tensorization theorem for the Schrijver number (Theorem~\ref{thm:schrijver-quasi-tensorization}). This gives a significantly tighter bound on $M_r(n)$ and $M^{\pm}_r(n)$ compared to the multiplicative \lovasz theta number (Corollary~\ref{cor:Mr(n)}).
The second application was an improvement on the geometric lemma in~\cite{balister2024upperboundsmulticolorramsey},
giving an improvement to known upper bounds on the multicolor Ramsey number $R_r(k)$ (Lemma~\ref{lem:or-composed-geometric-lemma}, Theorem~\ref{thm:improved-multicolor-ramsey}). 

Some interesting future directions and problems are as follows. The first conjecture concerns a tighter bound on the Schrijver number of $G_n^{\boxtimes 2}$.

\begin{conjecture}
    There exists an absolute constant $C > 0$ such that $\vartheta'(G_n^{\boxtimes 2}) \le n^C$.
\end{conjecture}
The above conjecture is supported by numerical evidence. We observed that the correct order of $M_2(n)$ seems to be $n^{2+o(1)}$. This may potentially be attacked via a tighter analysis of the Delsarte LP. Another related future direction is on a stronger quasi-tensorization theorem for the Schrijver number.

\begin{problem}
    Let $G_1 = (V_1, E_1), \ldots, G_r = (V_r, E_r)$ be graphs and suppose $\vartheta'(G_i) \geq 2$ for all $i\in [r]$. 
    Is there a constant $C_r$ such that
\[
    \vartheta'(G_1\boxtimes \ldots \boxtimes G_r) \leq \prod_{i=1}^r \vartheta'(G_i)^{C_r}.
\] 
\end{problem}

\section{Acknowledgments}

We would like to thank Noga Alon for introducing us to this problem, and Pedro Paredes and Yuval Wigderson for feedback on initial stages of this work.

\bibliographystyle{alpha}
\bibliography{ref}

\section*{AI Use Statement}

Artificial intelligence assistance (ChatGPT 5.5 Pro) was used to verify the correctness of mathematical proofs, especially the parameter bookkeeping in the proof of Lemma \ref{lem:ramsey-bookkeeping}.

\appendix
\section{Appendix}

\subsection{Proof of Lemma \ref{lem:stren-or}}

\begin{proof}[Proof of Lemma~\ref{lem:stren-or}]
Set $\lambda =  1 - \frac{1}{r},$ \(u_i=1+t_i\) and \(v_i=1+\lambda t_i\).  Since \(t_i\ge-1\), we have \(u_i\ge0\) and \(v_i\ge1-\lambda=1/r\).  Moreover, \((1+t)/(1+\lambda t)\le1/\lambda\) for all \(t\ge-1\), because this ratio is increasing on \([-1,\infty)\) and $\lim_{t \rightarrow \infty} \frac{1+t}{1 + \lambda t} = \frac{1}{\lambda}$.  Denote $j$ to be one of the coordinates satisfying  \(-1\le t_j\le -1+\eta\). Note that we must have that \(u_j\le\eta\) and \(v_j\ge1/r\).  Therefore, we have that
\begin{align*}
Q_r(t)
&= \prod_{i=1}^r u_i-\prod_{i=1}^r v_i \\
&=
\left(
    \frac{\prod_{i=1}^r u_i}{\prod_{i=1}^r v_i}
    -1
\right)
\prod_{i=1}^r v_i \\
&\le
\left(\eta r\lambda^{-(r-1)}-1\right)
\prod_{i=1}^r v_i
&& \text{(by the ratio bound)} \\
&\le
\left(\eta er-1\right)
\prod_{i=1}^r v_i
&& \text{(since } \lambda^{-(r-1)}\le e) \\
&\le
-\frac78 \prod_{i=1}^r v_i
&& \text{(since } \eta er\le \frac18) \\
&\le
-\frac78 r^{-r},
&& \text{(since } v_i\ge \frac1r \text{ for all } i)
\end{align*}
giving the result.
\end{proof}
\subsection{Proof of Lemma \ref{lem:ramsey-bookkeeping}}

We first introduce some notation. An \(r\)-coloring of \(E(K_n)\) is a map
\(\chi:E(K_n)\to[r]\). Given a vertex \(v\), its color-\(i\) neighborhood is
\[
    N_i(v):=\{u:\chi(\{u,v\})=i\}.
\]
A book \((A,B)\) is the graph on vertex set \(A\cup B\) obtained from the
clique on \(A\cup B\) by deleting all edges internal to \(B\). We call \(A\)
the spine and \(B\) the page set. A \((t,m)\)-book in color \(i\) is a book
\((A,B)\) with \(|A|=t\), \(|B|=m\), and all edges of the book having color
\(i\).

Given two vertex sets \(X,Y\), define the minimum color-\(i\) density from
\(X\) to \(Y\) by
\[
    p_i(X,Y):=\min_{x\in X}\frac{|N_i(x)\cap Y|}{|Y|}.
\]
The key application of the geometric lemma is towards Lemma 2.2 in~\cite{balister2024upperboundsmulticolorramsey}. Parametrized, it is as follows:

\begin{lemma}[Parametrized Lemma 2.2 of~\cite{balister2024upperboundsmulticolorramsey}]
\label{lem:param-key}
Suppose that equation \eqref{eq:ramsey_lemma} (the geometric lemma) holds with constants $C_r$ and $\beta_r$. Let \(\chi\) be an \(r\)-coloring of \(E(K_n)\), let
\(X,Y_1,\ldots,Y_r \subset V(K_n)\) be non-empty sets of vertices,
and let \(\alpha_1,\ldots,\alpha_r > 0\). There exists a vertex
\(x \in X\), a color \(\ell \in [r]\), sets \(X' \subset X\) and
\(Y_1',\ldots,Y_r'\) with \(Y_i' \subset N_i(x) \cap Y_i\) for each
\(i \in [r]\), and \(\lambda \geq -1\), such that
\[
|X'| \geq \beta_r e^{-C_r\sqrt{\lambda+1}} |X|
\qquad \text{and} \qquad
p_\ell(X',Y_\ell') \geq p_\ell(X,Y_\ell) + \lambda \alpha_\ell,
\]
and moreover
\[
|Y_i'| = p_i(X,Y_i)|Y_i|
\qquad \text{and} \qquad
p_i(X',Y_i') \geq p_i(X,Y_i) - \alpha_i
\]
for every \(i \in [r]\).
\end{lemma}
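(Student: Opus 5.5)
The plan is to reproduce the argument of Lemma 2.2 in \cite{balister2024upperboundsmulticolorramsey}, keeping the geometric constants symbolic throughout. First, for each $x\in X$ and $i\in[r]$, fix a subset $Y_i(x)\subset N_i(x)\cap Y_i$ of size exactly $p_i(X,Y_i)|Y_i|$. This is possible because $p_i(X,Y_i)$ is a minimum over $X$, and it gives the size condition $|Y_i'|=p_i(X,Y_i)|Y_i|$ automatically. Write $p_i=p_i(X,Y_i)$. Next, define centered, rescaled indicator vectors
\[
\sigma_i(x)=\frac{\mathbf 1_{Y_i(x)}-p_i\mathbf 1_{Y_i}}{\sqrt{\alpha_i p_i|Y_i|}}\in\mathbb R^{Y_i}.
\]
Take $U,U'$ independent and uniform on $X$. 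A direct computation gives
\[
\langle\sigma_i(x),\sigma_i(y)\rangle=\frac{|Y_i(x)\cap Y_i(y)|-p_i^2|Y_i|}{\alpha_i p_i|Y_i|}.
\]
Therefore $Z_i\ge\lambda$ exactly when $|Y_i(x)\cap Y_i(y)|/|Y_i(x)|\ge p_i+\lambda\alpha_i$.

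Apply the assumed inequality \eqref{eq:ramsey_lemma} to obtain a color $\ell$ and a threshold $\lambda\ge-1$. This gives
\[
\Pr[Z_\ell\ge\lambda,\ Z_j\ge-1\ \forall j\ne\ell]\ge\beta_r e^{-C_r\sqrt{\lambda+1}}.
\]
By averaging over $U$, there is an $x\in X$ such that the set $X'$ of all $y\in X$ meeting the event with the pair $(x,y)$ satisfies $|X'|\ge\beta_r e^{-C_r\sqrt{\lambda+1}}|X|$. Set $Y_i'=Y_i(x)$. Every $y\in X'$ then has color-$\ell$ density into $Y_\ell'$ at least $p_\ell+\lambda\alpha_\ell$, and color-$i$ density into $Y_i'$ at least $p_i-\alpha_i$ for each $i$. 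Taking the minimum over $y\in X'$ gives the two density conclusions for $p_\ell(X',Y_\ell')$ and $p_i(X',Y_i')$.

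The main obstacle I expect is an edge case. For the color $\ell$ itself, the event only gives $Z_\ell\ge\lambda\ge-1$, and that already supplies the bound $p_\ell-\alpha_\ell$ needed for the $i=\ell$ case, so this case causes no difficulty. The real issue is degenerate densities: $p_i=0$ makes the normalization undefined. I would handle this by treating a color with $p_i=0$ as trivial, setting $\sigma_i\equiv0$. Then $Z_i=0\ge-1$ holds for every pair, and the conclusions for that color are vacuous because $Y_i'=\emptyset$. Everything else is routine once the normalization has been chosen.
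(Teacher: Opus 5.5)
Your proposal is correct and follows the argument the paper relies on. That argument is the proof of Lemma 2.2 in \cite{balister2024upperboundsmulticolorramsey}: choose $N_i'(x)\subset N_i(x)\cap Y_i$ of size $p_i|Y_i|$, use the centered rescaled indicators, apply the geometric lemma to uniform $U,U'$ on $X$ and average over $U$, with $\beta,C$ replaced by $\beta_r,C_r$. Two points are cosmetic: the $\sigma_i$ should be padded into a common space such as $\mathbb R^{V(K_n)}$, and the case $p_i=0$ is an ill-posed convention issue rather than vacuous, since $p_i(X',\emptyset)$ is undefined; the original avoids it because all densities stay positive in the book algorithm.
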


Having established the above lemma,
\cite{balister2024upperboundsmulticolorramsey} proceeds by running a book
algorithm. We will not describe the algorithm in full, but only recall the
notation and the properties that we need. The book algorithm takes as input an \(r\)-coloring \(\chi\), sets
\(X,Y_1,\ldots,Y_r\), and parameters \(t\in\mathbb N\), \(\delta>0\), and
\(\lambda_0\ge 0\). Set
\[
    p_0=\min_{i\in[r]}p_i(X,Y_i).
\]
After \(s\) steps of the algorithm, we denote the reservoir set by \(X(s)\),
the page sets by \(Y_1(s),\ldots,Y_r(s)\), and the color-\(i\) spine by
\(T_i(s)\). The algorithm initializes $T_1(0)=\cdots=T_r(0)=\emptyset,$
    $X(0)=X,$ and  $Y_i(0)=Y_i.$
We also write
\[
    p_i(s)=p_i(X(s),Y_i(s)),
    \qquad
    \alpha_i(s)=\frac{p_i(s)-p_0+\delta}{t}.
\]

At each non-terminal step, the algorithm applies Lemma~\ref{lem:param-key} to
the current sets \(X(s),Y_1(s),\ldots,Y_r(s)\) with parameters
\(\alpha_1(s),\ldots,\alpha_r(s)\). This produces a vertex \(x(s)\), a color
\(\ell(s)\), updated sets, and a parameter \(\lambda(s)\ge -1\). If
\(\lambda(s)>\lambda_0\), the algorithm performs a density-boost step in color
\(\ell(s)\). Otherwise, it performs a color step, adding \(x(s)\) to one of the
spines. The algorithm stops only when either \(X(s)=\emptyset\), or $\max_{i\in[r]} |T_i(s)|=t.$

For each \(i\in[r]\), let $B_i(s)
    :=
    \{0\le j<s:\ell(j)=i\text{ and }\lambda(j)>\lambda_0\},$
and set $B(s)=B_1(s)\cup\cdots\cup B_r(s).$ The following lemma establishes the properties of the algorithm.
\begin{lemma}
\label{lem:book-algorithm-package}
Run the multicolor book algorithm of \cite{balister2024upperboundsmulticolorramsey}. Then the following statements hold.

\begin{enumerate}
    \item For every \(i\in[r]\) and \(s\in\mathbb N\), we have that $
        p_i(s)-p_0+\delta
        \ge
        \delta\left(1-\frac1t\right)^t
        \prod_{j\in B_i(s)}
        \left(1+\frac{\lambda(j)}{t}\right).
    $

    \item If \(t\ge 2\), then
    $
        p_i(s)\ge p_0-\frac{3\delta}{4}
        \ \text{and}\
        \alpha_i(s)\ge \frac{\delta}{4t}
    $
    for every \(i\in[r]\) and \(s\in\mathbb N\).

    \item If \(t\ge \lambda_0\ge 2\) and \(\delta\le 1/4\), then
    $
        |B_i(s)|
        \le
        \frac{4\log(1/\delta)}{\lambda_0}\,t
    $
    for every \(i\in[r]\) and \(s\in\mathbb N\).

    \item If \(t\ge 2\), then
    $
        |Y_i(s)|
        \ge
        \left(p_0-\frac{3\delta}{4}\right)^{t+|B_i(s)|}
        |Y_i(0)|
    $
    for every \(i\in[r]\) and \(s\in\mathbb N\).

    \item Let $
        \eta=\frac{\beta_r}{r}e^{-C_r\sqrt{\lambda_0+1}}.$
    Then
    $
        |X(s)|
        \ge
        \eta^{rt+|B(s)|}
        \exp\left(
            -C_r\sum_{j\in B(s)}\sqrt{\lambda(j)+1}
        \right)|X(0)|-rt
    $
    for every \(s\in\mathbb N\).

    \item If \(t\ge \lambda_0/\delta>0\) and \(\delta\le 1/4\), then $
        \sum_{j\in B(s)}\sqrt{\lambda(j)}
        \le
        \frac{7r\log(1/\delta)}{\sqrt{\lambda_0}}\,t
    $
    for every \(s\in\mathbb N\).
\end{enumerate}
\end{lemma}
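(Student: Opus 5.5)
We would prove the six items by following the analysis of the book algorithm in \cite{balister2024upperboundsmulticolorramsey}, keeping the dependence on $C_r$ and $\beta_r$ explicit. The only input that changes is Lemma~\ref{lem:param-key}. At each step, for every color $i$, it gives a lower bound on the new density $p_i(s+1)$ and on the size of the new page set $|Y_i(s+1)|$. It also gives a lower bound on the surviving reservoir $X(s+1)$.

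\textbf{Density bound (item 1).} Write $q_i(s)=p_i(s)-p_0+\delta$, so that $\alpha_i(s)=q_i(s)/t$. Lemma~\ref{lem:param-key} gives $p_i(s+1)\ge p_i(s)-\alpha_i(s)$, that is, $q_i(s+1)\ge(1-1/t)q_i(s)$. At a density-boost step in color $i$ it gives instead $q_i(s+1)\ge(1+\lambda(s)/t)q_i(s)$.

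The factors $(1-1/t)$ must not be applied more than $t$ times, and this is the one subtle point. We would follow \cite{balister2024upperboundsmulticolorramsey}: a color step adds the vertex $x(s)$ to a spine $T_j$, and the reservoir is then restricted so that densities are lost only through color steps. The algorithm stops once some spine reaches size $t$. Following their bookkeeping, we would check that the number of multiplicative losses in color $i$ is at most $t$. Item 1 then follows by induction on $s$.

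\textbf{Items 2 and 4.} Item 2 is immediate from item 1. Since $(1-1/t)^t\ge 1/4$ for $t\ge2$, we get $q_i(s)\ge\delta/4$. This gives $p_i(s)\ge p_0-3\delta/4$ and $\alpha_i(s)\ge\delta/(4t)$.

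For item 4, each step replaces $Y_i(s)$ by a set of size $p_i(s)|Y_i(s)|$. By item 2 this multiplies $|Y_i|$ by at least $p_0-3\delta/4$. This shrinkage happens only at steps that either belong to $B_i(s)$ or add a vertex to the color-$i$ spine. The latter happen at most $t$ times, which gives the exponent $t+|B_i(s)|$.

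\textbf{Items 3 and 6.} Item 3 follows by combining item 1 with $q_i(s)\le1+\delta\le 2$. This gives
\[\textstyle\sum_{j\in B_i(s)}\log(1+\lambda(j)/t)\le \log(8/\delta).\]
Each $j\in B_i(s)$ has $\lambda(j)>\lambda_0$, and $\log(1+x)\ge x/2$ for $x\le1$, where we use $t\ge\lambda_0$. Hence each term is at least $\lambda_0/(2t)$. Therefore $|B_i(s)|\le 2t\log(8/\delta)/\lambda_0\le 4t\log(1/\delta)/\lambda_0$, using $\delta\le1/4$.

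For item 6, we bound $\sum_{j\in B_i(s)}\sqrt{\lambda(j)}$ using the same inequality $\sum\log(1+\lambda(j)/t)\le\log(8/\delta)$. Split the boosts into those with $\lambda(j)\le t$ and those with $\lambda(j)>t$.

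For $\lambda(j)\le t$, we have $\log(1+\lambda(j)/t)\ge \lambda(j)/(2t)$, so
\[\sqrt{\lambda(j)}\le \lambda(j)/\sqrt{\lambda_0}\le 2t\log(1+\lambda(j)/t)/\sqrt{\lambda_0}.\]
For $\lambda(j)>t$, the function $\sqrt{\lambda}/\log(1+\lambda/t)$ is controlled by $t/\sqrt{\lambda_0}$ in the same way; this step uses $t\ge\lambda_0/\delta$. Summing over both ranges and over the $r$ colors gives the constant $7r\log(1/\delta)$.

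\textbf{Item 5.} At each step, $X(s+1)$ loses the vertex $x(s)$ and keeps at least a $\beta_r e^{-C_r\sqrt{\lambda(s)+1}}$ fraction of $X(s)$. At a color step, where $\lambda(s)\le\lambda_0$, this fraction is at least $r\eta$. At a boost step it is at least $\eta\, e^{-C_r\sqrt{\lambda(s)+1}}$, where we absorb the factor $\beta_r$ into $\eta$ together with an $e^{C_r\sqrt{\lambda_0+1}}$ slack.

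The number of color steps is at most $rt$, since each adds a vertex to one of $r$ spines of size at most $t$. Multiplying the fractions gives the main term. Each step removes one extra vertex; charging these removals over the at most $rt$ color steps and handling boost steps via the $\eta$ factors gives the subtracted term $-rt$.

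\textbf{Main obstacle.} The main difficulty is item 5 together with item 1. Both require matching the exact step rules of the algorithm in \cite{balister2024upperboundsmulticolorramsey}, namely which sets are restricted at a color step and at a boost step. We would follow their proofs of the corresponding lemmas line by line, replacing their values of $\beta$ and $C$ by the parameters $\beta_r$ and $C_r$.
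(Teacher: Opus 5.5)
Your overall route matches the paper's. The paper proves this lemma only by citing Lemmas 4.1--4.6 of \cite{balister2024upperboundsmulticolorramsey} and noting that the proofs go through verbatim with $C,\beta$ replaced by $C_r,\beta_r$, which is exactly your closing paragraph. The self-contained sketches you add, however, contain several concrete errors that would need repair if they were meant to stand on their own.

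\textbf{Item 5.} You omit part of the colour step. To keep $T_j$ a colour-$j$ clique, the reservoir is cut down to $X(s+1)=N_j(x)\cap X'$, with $j$ chosen by pigeonhole so that $|N_j(x)\cap X'|\ge(|X'|-1)/r$. So a colour step only guarantees an $\eta$ fraction, not $r\eta$, minus an additive $1$. This is where both the $1/r$ in $\eta$ and the $-rt$ term come from. At a boost step the algorithm sets $X(s+1)=X'$, with no additive loss, so nothing needs to be absorbed. Absorbing a per-step $-1$ into multiplicative slack would not work anyway without a lower bound on $|X(s)|$.

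\textbf{Item 1.} The mechanism you defer is simple. $Y_i$ is replaced only at colour steps that add to $T_i$ and at colour-$i$ boosts. At every other step $Y_i$ is unchanged and $X$ shrinks, and $p_i(X'',Y)\ge p_i(X,Y)$ for $X''\subseteq X$. Hence the factor $1-1/t$ is applied only $|T_i(s)|\le t$ times.

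\textbf{Item 3.} Your final inequality $2\log(8/\delta)\le 4\log(1/\delta)$ needs $\delta\le 1/8$ and fails on $(1/8,1/4]$. Using the concavity bound $\log(1+x)\ge x\log 2$ on $[0,1]$ instead gives $|B_i(s)|\le t\log(8/\delta)/(\lambda_0\log 2)\le 4t\log(1/\delta)/\lambda_0$ for $\delta\le 1/4$.

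\textbf{Item 6.} For $\lambda(j)>t$, the ratio $\sqrt{\lambda}/\log(1+\lambda/t)$ is unbounded, so ``controlled in the same way'' needs a cap on $\lambda(j)$. The same product bound supplies one: every factor is at least $1$, so $\lambda(j)/t\le 8/\delta$. Combined with $t\ge\lambda_0/\delta$, this gives $\sqrt{\lambda(j)}\le 2t\log(1+\lambda(j)/t)/\sqrt{\lambda_0}$ in that range too. Summing over $j\in B_i(s)$ and over the $r$ colours yields $2rt\log(8/\delta)/\sqrt{\lambda_0}\le 5rt\log(1/\delta)/\sqrt{\lambda_0}$.
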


\begin{proof}
These are Lemmas 4.1--4.6 of
\cite{balister2024upperboundsmulticolorramsey}. The proofs are unchanged after replacing their
constants \(C,\beta\) by \(C_r,\beta_r\).
\end{proof}

\begin{lemma}
\label{lem:param-book}
Assume that equation~\eqref{eq:ramsey_lemma} holds with parameters
\(C_r\ge 1\) and \(0<\beta_r\le 1\). Let \(0<p_r\le 1\), let
\(\mu_r\ge 2^{12}C_r^2\), and suppose that
\[
    \log\frac r{\beta_r}
    \le
    \frac{\mu_r}{16}\log\frac{\mu_r^2}{p_r}.
\]
Let \(t_r,m_r\in\mathbb N\) satisfy $t_r\ge \frac{\mu_r^5}{p_r^2}.$
Furthermore, let \(\chi\) be an \(r\)-coloring, and
\(X,Y_1,\ldots,Y_r\) be non-empty vertex sets such that $p_i(X,Y_i)\ge p_r$
for every $i\in[r].$

If $|X|\ge \left(\frac{\mu_r^2}{p_r}\right)^{\mu_r rt_r}$
and $|Y_i|
    \ge
    \left(
        \frac{\exp(2^9C_r^2/\mu_r^2)}{p_r}
    \right)^{t_r} m_r$ for every $i\in[r],$ then \(\chi\) contains a monochromatic \((t_r,m_r)\)-book.
\end{lemma}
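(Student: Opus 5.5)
The plan is to run the book algorithm of \cite{balister2024upperboundsmulticolorramsey} on $\chi,X,Y_1,\ldots,Y_r$ with parameters $t=t_r$, $\delta=p_r/4$ and $\lambda_0=\mu_r$ (or a constant multiple of $\mu_r$). We then use the six properties of Lemma~\ref{lem:book-algorithm-package} to show that the algorithm cannot terminate with $X(s)=\emptyset$. Hence it stops with some spine $T_i(s)$ of size $t_r$. By construction every vertex of $T_i(s)$ is joined in colour $i$ to the whole current page set $Y_i(s)$. So it suffices to show $|Y_i(s)|\ge m_r$, which then yields the monochromatic $(t_r,m_r)$-book.

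First we check the hypotheses of the items. We have $t_r\ge \mu_r^5/p_r^2\ge \lambda_0/\delta$ and $t_r\ge\lambda_0\ge 2$, and $\delta\le 1/4$. Items 2, 3 and 6 therefore apply. Item 3 bounds the number of boost steps: $|B_i(s)|\le 4\log(4/p_r)t_r/\mu_r$. Item 6 gives $\sum_{j\in B(s)}\sqrt{\lambda(j)}\le 7r\log(4/p_r)t_r/\sqrt{\mu_r}$.

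\textbf{Page sets.} By item 4, using $p_0\ge p_r$,
\[
|Y_i(s)|\ge (p_r/4)^{t_r+|B_i(s)|}\,|Y_i|\;\ge\; p_r^{t_r}\,(\cdots)^{-1}m_r .
\]
This is too lossy as written: $p_0-3\delta/4$ should be compared with $p_r(1-3/16)$ rather than $p_r/4$. The factor $(1-3/16)^{t_r}$ is not covered by $\exp(2^9C_r^2/\mu_r^2)^{t_r}$. So I expect $\delta$ must be taken much smaller, say $\delta=p_r/\mu_r^2$ (this is where the $\mu_r^2/p_r$ in the hypotheses comes from). Then $(p_0-3\delta/4)^{t_r}\ge p_r^{t_r}e^{-t_r/\mu_r^2}$. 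Also $p_r^{|B_i|}\ge \exp(-4\log(1/\delta)\log(1/p_r)t_r/\lambda_0)$, which has to be absorbed into the factor $\exp(2^9C_r^2/\mu_r^2)^{t_r}$. For this I would choose $\lambda_0$ of order $\mu_r^2\log^2(\mu_r^2/p_r)/C_r^2$, or simply tune $\lambda_0$ so that the exponents match. Checking $t_r\ge\lambda_0/\delta$ then uses $t_r\ge\mu_r^5/p_r^2$.

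\textbf{Reservoir.} By item 5 we need
\[
\eta^{rt_r+|B(s)|}\exp\Big(-C_r\sum_{j\in B(s)}\sqrt{\lambda(j)+1}\Big)|X|>rt_r ,\qquad \eta=\tfrac{\beta_r}{r}e^{-C_r\sqrt{\lambda_0+1}}.
\]
Taking logarithms, the loss is at most
\[
(rt_r+|B|)\Big(\log\tfrac r{\beta_r}+C_r\sqrt{\lambda_0+1}\Big)+C_r\big(|B|+\textstyle\sum\sqrt{\lambda(j)}\big).
\]
Here $|B|\le r\max_i|B_i|\le rt_r$. The hypothesis $\log(r/\beta_r)\le\frac{\mu_r}{16}\log(\mu_r^2/p_r)$ controls the first part. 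The condition $\mu_r\ge 2^{12}C_r^2$ together with $\lambda_0\lesssim\mu_r^2$ gives $C_r\sqrt{\lambda_0}\le \frac{\mu_r}{16}\log(\mu_r^2/p_r)$-type bounds. Item 6 bounds $C_r\sum\sqrt{\lambda(j)}$ by $7rC_r\log(1/\delta)t_r/\sqrt{\lambda_0}$, which is small compared with $\mu_r rt_r\log(\mu_r^2/p_r)$. The total loss is then at most $\frac12\mu_r rt_r\log(\mu_r^2/p_r)$. The hypothesis $|X|\ge(\mu_r^2/p_r)^{\mu_r rt_r}$ leaves more than $rt_r$ vertices, so $X(s)\neq\emptyset$ throughout.

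The main obstacle is the joint choice of $\delta$ and $\lambda_0$. The page-set computation wants $\lambda_0$ large, so that there are few boost steps. The reservoir computation wants $C_r\sqrt{\lambda_0}\lesssim\mu_r\log(\mu_r^2/p_r)$. The window between these is exactly what the exponent $2^9C_r^2/\mu_r^2$ and the constraint $\mu_r\ge2^{12}C_r^2$ encode. I would first try $\delta=p_r/\mu_r^2$ and $\lambda_0=\mu_r^2/(2^{8}C_r^2)\cdot\log^2(\mu_r^2/p_r)$, and adjust the constants if the page-set bound fails. This mirrors the proof of the book theorem in \cite{balister2024upperboundsmulticolorramsey} with $C,\beta$ replaced by $C_r,\beta_r$.
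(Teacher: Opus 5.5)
Your final plan is the paper's proof: take $\delta=p_r/\mu_r^2$ and $\lambda_0$ of order $\mu_r^2\log^2(\mu_r^2/p_r)/C_r^2$, use items 3--6 of Lemma~\ref{lem:book-algorithm-package} to keep $|Y_i(s)|\ge m_r$ and $X(s)\neq\emptyset$, then read off the book from a full spine. The constant adjustment you anticipate is genuinely needed, however, and your first try does not close.

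\textbf{Your first-try $\lambda_0$ fails the page-set step.} Write $L=\log(\mu_r^2/p_r)$. With $\lambda_0=\mu_r^2L^2/(2^8C_r^2)$, item 3 only gives $|B_i(s)|\le 2^{10}C_r^2t_r/(\mu_r^2L)$. For small $p_r$, where $\log(1/p_r)/L\to 1$, the factor $p_r^{|B_i(s)|}$ can then be as small as about $e^{-2^{10}C_r^2t_r/\mu_r^2}$. The available slack $e^{2^9C_r^2t_r/\mu_r^2}$ cannot absorb this, so you need $\lambda_0>\mu_r^2L^2/(2^7C_r^2)$.

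\textbf{The paper's choice.} It takes $\lambda_0=(\mu_rL/(8C_r))^2$.
\begin{itemize}
\item Page sets: $|B_i(s)|\le 2^8C_r^2t_r/(\mu_r^2L)\le t_r$, so $p_r^{|B_i(s)|}\ge e^{-2^8C_r^2t_r/\mu_r^2}$. The leftover $(2^8C_r^2-2)t_r/\mu_r^2\ge 0$ covers the loss $(1-\frac{3}{4\mu_r^2})^{t_r+|B_i(s)|}\ge e^{-2t_r/\mu_r^2}$.
\item Reservoir: $C_r\sqrt{\lambda_0+1}\le \sqrt2\,\mu_rL/8$ and $\log(r/\beta_r)\le \mu_rL/16$ still give $\eta^{2rt_r}\ge(\mu_r^2/p_r)^{-\mu_r r t_r/2}$.
\item Boost steps: item 6 gives $C_r\sum_{j\in B(s)}\sqrt{\lambda(j)+1}\le 2^7C_r^2rt_r/\mu_r$, which is negligible against $\mu_r\ge 2^{12}C_r^2$.
\end{itemize}

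\textbf{Termination.} You should also say why the algorithm stops at all before concluding that it stops with a full spine. There are at most $rt_r$ colour steps, and by the bound on $|B(s)|$ at most $rt_r$ boost steps.
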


\begin{proof}
Write $p_{0,r}=\min_{i\in[r]}p_i(X,Y_i).$ By the density hypothesis, \(p_{0,r}\ge p_r\). Run the multicolor book
algorithm with
\[
    \delta_r=\frac{p_r}{\mu_r^2},
    \qquad
    \lambda_{0,r}
    =
    \left(\frac{\mu_r\log(1/\delta_r)}{8C_r}\right)^2.
\]
Since \(p_r\le 1\), \(C_r\ge 1\), and \(\mu_r\ge 2^{12}C_r^2\), we have
$\delta_r\le \frac14,$ and $\lambda_{0,r}\ge 2.$ Moreover, using \(\log x\le 2x^{1/4}\) for \(x\ge 1\),
\[
    \frac{\lambda_{0,r}}{\delta_r}
    =
    \frac{\mu_r^4\log^2(1/\delta_r)}{64C_r^2p_r}
    \le
    \frac{\mu_r^5}{p_r^2}
    \le t_r.
\]
In particular, \(t_r\ge \lambda_{0,r}\). By Lemma~\ref{lem:book-algorithm-package}(3), for every \(i\in[r]\) and
every \(s\),
\[
    |B_i(s)|
    \le
    \frac{4\log(1/\delta_r)}{\lambda_{0,r}}t_r
    =
    \frac{256C_r^2}{\mu_r^2\log(1/\delta_r)}t_r
    \le t_r.
\]
Thus $|B(s)|\le rt_r.$ We now show that the page sets never become too small. By
Lemma~\ref{lem:book-algorithm-package}(4), \(p_{0,r}\ge p_r\), and the lower
bound on \(|Y_i|\),
\[
\begin{aligned}
    |Y_i(s)|
    &\ge
    \left(p_r-\frac{3\delta_r}{4}\right)^{t_r+|B_i(s)|}
    \left(
        \frac{\exp(2^9C_r^2/\mu_r^2)}{p_r}
    \right)^{t_r}m_r  \\
    &=
    p_r^{|B_i(s)|}
    \left(1-\frac{3}{4\mu_r^2}\right)^{t_r+|B_i(s)|}
    \exp(2^9C_r^2t_r/\mu_r^2)m_r  \\
    &\ge
    e^{-2t_r/\mu_r^2}
    p_r^{|B_i(s)|}
    \exp(2^9C_r^2t_r/\mu_r^2)m_r.
\end{aligned}
\]
Since \(\log(1/p_r)\le \log(1/\delta_r)\), the bound on \(|B_i(s)|\) gives
\[
    p_r^{|B_i(s)|}
    \ge
    \exp(-256C_r^2t_r/\mu_r^2).
\]
Therefore
\[
    |Y_i(s)|
    \ge
    \exp\left(\frac{(2^9-256)C_r^2-2}{\mu_r^2}t_r\right)m_r
    \ge m_r
\]
for every \(i\in[r]\) and every \(s\). It remains to show that \(X(s)\) never becomes empty before the algorithm
reaches a spine of size \(t_r\). Set $\eta_r=\frac{\beta_r}{r}e^{-C_r\sqrt{\lambda_{0,r}+1}}.$ By Lemma~\ref{lem:book-algorithm-package}(5) and \(|B(s)|\le rt_r\),
\[
    |X(s)|
    \ge
    \eta_r^{2rt_r}
    \exp\left(
        -C_r\sum_{j\in B(s)}\sqrt{\lambda(j)+1}
    \right)|X|-rt_r.
\]
Now
\[
    \log\frac r{\beta_r}
    \le
    \frac{\mu_r}{16}\log(1/\delta_r),
    \qquad
    C_r\sqrt{\lambda_{0,r}+1}
    \le
    \frac{\sqrt2\,\mu_r}{8}\log(1/\delta_r),
\]
so
\[
    \eta_r^{2rt_r}
    \ge
    \exp\left(-\frac{\mu_r rt_r}{2}\log(1/\delta_r)\right)
    =
    \left(\frac{\mu_r^2}{p_r}\right)^{-\mu_r rt_r/2}.
\]
On the other hand, Lemma~\ref{lem:book-algorithm-package}(6) gives
\[
    \sum_{j\in B(s)}\sqrt{\lambda(j)}
    \le
    \frac{7r\log(1/\delta_r)}{\sqrt{\lambda_{0,r}}}t_r
    =
    \frac{56C_rrt_r}{\mu_r}.
\]
Since every \(j\in B(s)\) satisfies \(\lambda(j)>\lambda_{0,r}\ge 1\), it follows that
\[
    \sum_{j\in B(s)}\sqrt{\lambda(j)+1}
    \le
    2\sum_{j\in B(s)}\sqrt{\lambda(j)}
    \le
    \frac{2^7C_rrt_r}{\mu_r}.
\]
Combining the last two estimates with the lower bound on \(|X|\), we obtain
\[
    |X(s)|
    \ge
    \left(\frac{\mu_r^2}{p_r}\right)^{\mu_r rt_r/2}
    \exp\left(-\frac{2^7C_r^2rt_r}{\mu_r}\right)
    -rt_r.
\]
Since \(\mu_r\ge 2^{12}C_r^2\) and \(\log(\mu_r^2/p_r)\ge 1\), the first term is
at least \(e^{rt_r}\). Hence the right-hand side is positive, and therefore
$|X(s)|>0$ for every \(s\).

Thus the algorithm cannot terminate because \(X(s)\) becomes empty. Before
termination there are at most \(rt_r\) color steps, and by the bound above
there are at most \(rt_r\) density-boost steps. Hence the algorithm must
terminate with $\max_{i\in[r]}|T_i|=t_r.$ 

Now, choose \(i\in[r]\) with \(|T_i|=t_r\). By construction, \(T_i\) is a
monochromatic clique in color \(i\), and $Y_i(s)\subseteq \bigcap_{x\in T_i}N_i(x).
$
Since \(|Y_i(s)|\ge m_r\), choosing any subset \(M\subseteq Y_i(s)\) of size
\(m_r\), the pair \((T_i,M)\) is a monochromatic \((t_r,m_r)\)-book in color
\(i\).
\end{proof}

We now deduce a Ramsey bound from the books. The following lemma collects the
Ramsey-side bookkeeping estimates from
\cite{balister2024upperboundsmulticolorramsey}. Since these estimates do not
depend on \(C_r\) or \(\beta_r\), they require no modification here. For positive integers \(k_1,\ldots,k_r\), let \(R_r(k_1,\ldots,k_r)\) denote
the multicolor Ramsey number, that is, the least \(N\) such that every
\(r\)-coloring of \(E(K_N)\) contains a monochromatic \(K_{k_i}\) in color
\(i\), for some \(i\in[r]\).

\begin{lemma}
\label{lem:ramsey-final-package}
The following Lemmas from \cite{balister2024upperboundsmulticolorramsey} hold.

\begin{enumerate}
    \item (Lemma 5.2 of
    \cite{balister2024upperboundsmulticolorramsey}). Let
    \(n,r\in\mathbb N\), \(0<\varepsilon<1\), and let \(\chi\) be an
    \(r\)-coloring of \(E(K_n)\). There exist disjoint sets of vertices
    \(S_1,\ldots,S_r,W\) such that, writing
    $
        s=|S_1|+\cdots+|S_r|,
    $
    we have
    $|W|\ge \left(\frac{1+\varepsilon}{r}\right)^s n,$
    and
    $|N_i(w)\cap W|
        \ge
        \left(\frac1r-\varepsilon\right)|W|-1
    $
    for every \(w\in W\) and every \(i\in[r]\). Moreover, \((S_i,W)\) is a
    monochromatic book in color \(i\) for every \(i\in[r]\).

    \item (Lemma 5.3 of
    \cite{balister2024upperboundsmulticolorramsey}). If \(k,r\ge 2\),
    \(0<\varepsilon<1\), and \(s_1,\ldots,s_r\in\{0,\ldots,k-1\}\) satisfy
    $
        s=s_1+\cdots+s_r\ge \varepsilon^2 k,
    $
    then
    \[
        R_r(k-s_1,\ldots,k-s_r)
        \le
        e^{-\varepsilon^3k/2}
        \left(\frac{1+\varepsilon}{r}\right)^s
        r^{rk}.
    \]

    \item (Lemma A.1 of
    \cite{balister2024upperboundsmulticolorramsey}, together with the
    Erd\H{o}s--Szekeres multinomial bound). For every \(r\ge 2\) and
    \(3\le t\le k\),
    \[
        R_r(k,\ldots,k,k-t)
        \le
        e^{-t^2/6k}r^{rk-t}.
    \]
\end{enumerate}
\end{lemma}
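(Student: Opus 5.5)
The plan is to cite rather than reprove. None of the three statements in Lemma~\ref{lem:ramsey-final-package} uses the geometric lemma~\eqref{eq:ramsey_lemma}, nor the parameters $C_r,\beta_r$, nor the book algorithm. So each item can be imported verbatim from \cite{balister2024upperboundsmulticolorramsey}. For each item I would check two things: that the hypotheses match ours exactly, and that the source proof is purely Ramsey-theoretic. I sketch the underlying arguments below only to confirm that no hidden dependence on Lemma~\ref{lem:or-composed-geometric-lemma} appears.

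For item 1 I would reproduce the greedy ``cleaning'' argument. Initialize $W=V(K_n)$ and $S_1=\cdots=S_r=\emptyset$. While some $w\in W$ and some color $i$ satisfy $|N_i(w)\cap W|<(1/r-\varepsilon)|W|-1$, do the following:
\begin{itemize}
\item use that the $r$ color-degrees of $w$ inside $W$ sum to $|W|-1$, so some color $j$ has degree at least $\frac{1+\varepsilon}{r}|W|$ (after the appropriate normalisation used in the source);
\item move $w$ into $S_j$ and replace $W$ by $N_j(w)\cap W$.
\end{itemize}
Each step shrinks $W$ by a factor of at least $(1+\varepsilon)/r$ and increases $s$ by one, which gives the size bound on $W$. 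By construction every vertex added to $S_j$ is joined in color $j$ to all later vertices of $S_j$ and to all of the final $W$, so each $(S_j,W)$ is a color-$j$ book. The process halts once the degree condition holds for every $w\in W$ and every color $i$. The only delicate point is the exact constant $(1+\varepsilon)/r$ versus $1/r+\varepsilon/(r-1)$; I would follow the source's normalisation here rather than rederive it.

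For items 2 and 3 the plan is as follows.
\begin{itemize}
\item Start from the Erd\H{o}s--Szekeres multinomial bound $R_r(k_1,\ldots,k_r)\le\binom{k_1+\cdots+k_r}{k_1,\ldots,k_r}$.
\item Item 2: apply this with $k_i=k-s_i$. Comparing the multinomial with $r^{rk}$ then yields a factor $r^{-s}$ times a Stirling-type correction. Trading part of that gain for $((1+\varepsilon)/r)^s$ leaves the surplus $e^{-\varepsilon^3k/2}$, and this step is where the hypothesis $s\ge\varepsilon^2k$ is used.
\item Item 3: use Lemma~A.1 of \cite{balister2024upperboundsmulticolorramsey}, which is the same multinomial estimate specialised to one deficient color.
\end{itemize}
These are elementary entropy and Stirling computations with no dependence on the geometric input.

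The main obstacle is not mathematical. It is making sure the cited statements are quoted with exactly the hypotheses stated in Lemma~\ref{lem:ramsey-final-package}, namely the ranges of $\varepsilon$, $t$ and $s_i$ and the ``$-1$'' in item 1. The proof itself would be one line: these are Lemmas~5.2, 5.3 and~A.1 of \cite{balister2024upperboundsmulticolorramsey}, whose proofs involve neither $C_r$ nor $\beta_r$.
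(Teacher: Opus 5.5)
Your proposal matches the paper, which likewise imports Lemmas 5.2, 5.3 and A.1 of \cite{balister2024upperboundsmulticolorramsey} verbatim because they involve neither $C_r$ nor $\beta_r$. Your sketches are sound, and the points you flag are simpler than you suggest: the greedy step in item 1 works directly because $\frac1r+\frac{\varepsilon}{r-1}\ge\frac{1+\varepsilon}{r}$, and item 2 needs no Stirling estimate, only $R_r(k-s_1,\ldots,k-s_r)\le r^{rk-s}$ together with $(1+\varepsilon)^s\ge e^{\varepsilon s/2}\ge e^{\varepsilon^3k/2}$.
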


With the above lemma, we are now able to complete the proof of Lemma \ref{lem:ramsey-bookkeeping}.

\begin{proof}[Proof of Lemma~\ref{lem:ramsey-bookkeeping}]
Choose absolute constants as follows. Take \(K_0\) sufficiently large, then take
\(A\) sufficiently large so that the interval
\[
    \left[
        \frac{2^{15}}{A^2},
        \frac{1}{100A\left(K_0+\frac{\log(2A^2)}{\log 4}+2\right)}
    \right]
\]
is non-empty. Choose \(b\) in this interval, choose \(a>0\) with
\(a\le b/1000\), choose \(c>0\) with
\(c\le \min\{a^3/4,b^2/100\}\), and finally choose \(K\) sufficiently large in
terms of \(A\) and \(b\).

It is enough, after harmless changes to \(c\) and \(K\) to absorb integer
rounding, to show that every \(r\)-coloring of \(K_n\) with
\[
    n\ge
    \exp\left(-\frac{ck}{r^3\mathcal M_r^3}\right)r^{rk}
\]
contains a monochromatic \(K_k\).

Let \(\chi\) be such a coloring. Set
\(\gamma_r=c/(r^3\mathcal M_r^3)\) and
\(\varepsilon_r=a/(r\mathcal M_r)\). Since \(c\le a^3/4\), we have
\(\gamma_r\le \varepsilon_r^3/2\). Apply
Lemma~\ref{lem:ramsey-final-package}(1) to obtain disjoint sets
\(S_1,\ldots,S_r,W\). Write \(s_i=|S_i|\) and
\(s=s_1+\cdots+s_r\). If some \(s_i\ge k\), then \(S_i\) already contains a
monochromatic \(K_k\), so we may assume \(s_i<k\) for every \(i\in[r]\).

If \(s\ge \varepsilon_r^2k\), then Lemma~\ref{lem:ramsey-final-package}(2)
gives
\[
    R_r(k-s_1,\ldots,k-s_r)
    \le
    e^{-\varepsilon_r^3k/2}
    \left(\frac{1+\varepsilon_r}{r}\right)^s
    r^{rk}.
\]
Since \(\gamma_r\le\varepsilon_r^3/2\), the lower bound on \(W\) gives
\[
    |W|
    \ge
    \left(\frac{1+\varepsilon_r}{r}\right)^s
    e^{-\gamma_r k}r^{rk}
    \ge
    R_r(k-s_1,\ldots,k-s_r).
\]
Thus \(W\) contains a monochromatic \(K_{k-s_i}\) in some color \(i\), and
because \((S_i,W)\) is a monochromatic book in color \(i\), this extends to a
monochromatic \(K_k\). Hence we may assume from now on that
\(s<\varepsilon_r^2k\).

Set \(X=Y_1=\cdots=Y_r=W\), and define
\[
    p_r=\frac1r-2\varepsilon_r,\qquad
    \mu_r=A\mathcal M_r,\qquad
    t_r=\left\lfloor\frac{bk}{\mathcal M_r}\right\rfloor,\qquad
    m_r=R_r(k,\ldots,k,k-t_r).
\]
By the choice of \(K\),
\[
    \frac{bk}{2\mathcal M_r}\le t_r\le \frac{bk}{\mathcal M_r},
    \qquad
    3\le t_r\le k,
    \qquad
    \varepsilon_r^2k\le \varepsilon_r t_r.
\]
Since \(a\) is small and \(\mathcal M_r\ge2\), we have \(p_r\ge 1/(2r)\).
Moreover,
\[
    |W|
    \ge
    \left(\frac{1+\varepsilon_r}{r}\right)^s n
    \ge
    r^{-\varepsilon_r^2k}
    \exp\left(-\frac{ck}{r^3\mathcal M_r^3}\right)r^{rk}
    \ge r^{rk/2},
\]
and hence, by increasing \(K\) if necessary, \(|W|\ge 1/\varepsilon_r\).
Therefore Lemma~\ref{lem:ramsey-final-package}(1) gives
\[
    |N_i(w)\cap W|
    \ge
    \left(\frac1r-\varepsilon_r\right)|W|-1
    \ge
    \left(\frac1r-2\varepsilon_r\right)|W|
    =
    p_r|W|
\]
for every \(w\in W\) and every \(i\in[r]\). Thus \(p_i(X,Y_i)\ge p_r\) for
every \(i\in[r]\).

We check the hypotheses of Lemma~\ref{lem:param-book}. First,
\(\mu_r=A\mathcal M_r\ge 2^{12}C_r^2\), since \(C_r^2\le\mathcal M_r\) and
\(A\) is large. Also,
\[
    \log\frac r{\beta_r}
    \le
    \mathcal M_r\log(2r)
    \le
    \frac{\mu_r}{16}\log\frac{\mu_r^2}{p_r},
\]
because \(\mu_r^2/p_r\ge A^2\mathcal M_r^2r\) and \(A\) is large. Finally, we have that $\frac{\mu_r^5}{p_r^2}
    \le
    4A^5r^2\mathcal M_r^5
    \le t_r$ because of the fact that \(k\ge Kr^2\mathcal M_r^6\) and the choice of \(K\).

For the reservoir condition, using \(p_r\ge 1/(2r)\), \(t_r\le
bk/\mathcal M_r\), and \(\mu_r=A\mathcal M_r\), we have
\[
    \left(\frac{\mu_r^2}{p_r}\right)^{\mu_r rt_r}
    \le
    \exp\left(Ab\,rk\log(2A^2r\mathcal M_r^2)\right).
\]
The hypothesis \(\log(4r\mathcal M_r^2)\le K_0\log(2r)\), together with the
choice of \(b\), implies that this is at most \(r^{rk/4}\). Since
\(|W|\ge r^{rk/2}\), the required lower bound on \(|X|\) follows.

It remains to check the page-set condition. By
Lemma~\ref{lem:ramsey-final-package}(3),
\[
    m_r
    =
    R_r(k,\ldots,k,k-t_r)
    \le
    e^{-t_r^2/6k}r^{rk-t_r}.
\]
Since \(c\le b^2/100\) and \(t_r\ge bk/(2\mathcal M_r)\), we have $
    \frac{ck}{r^3\mathcal M_r^3}\le \frac{t_r^2}{24k},$
and hence
\[
    n
    \ge
    \exp\left(-\frac{ck}{r^3\mathcal M_r^3}\right)r^{rk}
    \ge
    r^{t_r}e^{t_r^2/8k}m_r.
\]
Also \(p_r=(1-2\varepsilon_r r)/r\ge e^{-4\varepsilon_r r}/r\), so
\(r^{t_r}\ge p_r^{-t_r}e^{-4\varepsilon_r rt_r}\). Therefore
\[
    n
    \ge
    m_rp_r^{-t_r}
    \exp\left(\frac{t_r^2}{8k}-4\varepsilon_r rt_r\right).
\]
Since \(s<\varepsilon_r^2k\le\varepsilon_r t_r\), Lemma~\ref{lem:ramsey-final-package}(1)
gives
\[
    |W|
    \ge
    \left(\frac{1+\varepsilon_r}{r}\right)^{\varepsilon_r t_r}n
    \ge
    e^{-\varepsilon_r rt_r}n 
    \ge
    m_rp_r^{-t_r}
    \exp\left(\frac{t_r^2}{8k}-5\varepsilon_r rt_r\right).
\]
Finally, using \(C_r^2\le\mathcal M_r\), \(\mu_r=A\mathcal M_r\), and
\(t_r\ge bk/(2\mathcal M_r)\),
\[
    \frac{t_r}{8k}
    \ge
    \frac{b}{16\mathcal M_r}
    \ge
    \frac{5a}{\mathcal M_r}
    +
    \frac{2^9}{A^2\mathcal M_r}
    \ge
    5\varepsilon_r r+\frac{2^9C_r^2}{\mu_r^2}.
\]
Thus
\[
    |W|
    \ge
    \left(
        \frac{\exp(2^9C_r^2/\mu_r^2)}{p_r}
    \right)^{t_r}m_r.
\]
All hypotheses of Lemma~\ref{lem:param-book} are now satisfied with
\(p=p_r\), \(\mu=\mu_r\), \(t=t_r\), and \(m=m_r\).

Therefore \(\chi[W]\) contains a monochromatic \((t_r,m_r)\)-book. Suppose it
has color \(i\), spine \(T\), and page set \(P\). Since
\(|P|=m_r=R_r(k,\ldots,k,k-t_r)\), the coloring induced on \(P\) contains
either a \(K_{k-t_r}\) in color \(i\), or a \(K_k\) in some color \(j\ne i\).
In the latter case we are done. In the former case, the \(K_{k-t_r}\) together
with the spine \(T\) forms a monochromatic \(K_k\) in color \(i\) as desired.
\end{proof}

\end{document}

%% file: shortcuts.tex
\usepackage{xcolor}

\newcommand{\lovasz}{Lov\a' asz\space}

\newcommand{\Hyp}{\mathcal{H}_n}

\newcommand{\ip}[2]{\left\langle #1,#2\right\rangle}

\newcommand{\x}[1]{x^{(#1)}}

\newcommand{\sphere}{S^{n-1}}
\newcommand{\symm}{\mathbb{S}^n}

\newcommand{\E}{\mathbb{E}}
\renewcommand{\Pr}{\mathbb{P}}

\newcommand{\productkernel}[1]{\tilde{K}_{#1}}

\newcommand{\orhypercube}{f}

%% file: ref.bib
@article {balister2024upperboundsmulticolorramsey,
    AUTHOR = {Balister, Paul and Bollob\'as, B\'ela and Campos, Marcelo and
              Griffiths, Simon and Hurley, Eoin and Morris, Robert and
              Sahasrabudhe, Julian and Tiba, Marius},
     TITLE = {Upper bounds for multicolour {R}amsey numbers},
   JOURNAL = {Journal of the American Mathematical Society},
  FJOURNAL = {Journal of the American Mathematical Society},
    VOLUME = {39},
      YEAR = {2026},
    NUMBER = {3},
     PAGES = {765--780},
      ISSN = {0894-0347,1088-6834},
   MRCLASS = {05D10 (05C35 05D40)},
  MRNUMBER = {5057671},
       DOI = {10.1090/jams/1069},
       URL = {https://doi.org/10.1090/jams/1069},
}

@book{schrijver1986theory,
  author    = {Alexander Schrijver},
  title     = {Theory of Linear and Integer Programming},
  publisher = {Wiley},
  year      = {1986}
}

@article{delsarte1973algebraic,
  title={An algebraic approach to the association schemes of coding theory},
  author={Delsarte, Philippe},
  journal={Philips Res. Rep. Suppl.},
  volume={10},
  pages={vi+--97},
  year={1973}
}

@article{DelsarteGoethalsSeidel1977,
  author  = {Delsarte, Philippe and Goethals, Jean-Marie and Seidel, Johan Jacob},
  title   = {Spherical codes and designs},
  journal = {Geometriae Dedicata},
  volume  = {6},
  number  = {3},
  pages   = {363--388},
  year    = {1977},
  doi     = {10.1007/BF03187604}
}

@book{MacWilliamsSloane1977,
  author    = {MacWilliams, Florence Jessie and Sloane, Neil James Alexander},
  title     = {The Theory of Error-Correcting Codes},
  series    = {North-Holland Mathematical Library},
  volume    = {16},
  publisher = {North-Holland},
  address   = {Amsterdam},
  year      = {1977},
  isbn      = {0444850090}
}

@book{vanLint1999,
  author    = {van Lint, Jacobus Hendricus},
  title     = {Introduction to Coding Theory},
  edition   = {3},
  series    = {Graduate Texts in Mathematics},
  volume    = {86},
  publisher = {Springer},
  address   = {Berlin},
  year      = {1999},
  doi       = {10.1007/978-3-642-58575-3}
}

@article{Lovasz1979,
  author  = {Lov{\'a}sz, L{\'a}szl{\'o}},
  title   = {On the {S}hannon capacity of a graph},
  journal = {IEEE Transactions on Information Theory},
  volume  = {25},
  number  = {1},
  pages   = {1--7},
  year    = {1979},
  doi     = {10.1109/TIT.1979.1055985}
}

@article{Schrijver1979,
  author  = {Schrijver, Alexander},
  title   = {A comparison of the {D}elsarte and {L}ov{\'a}sz bounds},
  journal = {IEEE Transactions on Information Theory},
  volume  = {25},
  number  = {4},
  pages   = {425--429},
  year    = {1979},
  doi     = {10.1109/TIT.1979.1056072}
}

@article{DeKlerkPasechnik2007,
  author  = {de Klerk, Etienne and Pasechnik, Dmitrii V.},
  title   = {A note on the stability number of an orthogonality graph},
  journal = {European Journal of Combinatorics},
  volume  = {28},
  number  = {7},
  pages   = {1971--1979},
  year    = {2007},
  doi     = {10.1016/j.ejc.2006.08.011}
}

@article{Rankin1955,
  author  = {Rankin, Robert Alexander},
  title   = {The Closest Packing of Spherical Caps in \(n\) Dimensions},
  journal = {Proceedings of the Glasgow Mathematical Association},
  volume  = {2},
  pages   = {139--144},
  year    = {1955},
  doi     = {10.1017/S2040618500033219}
}

@incollection{godsil2016,
  author    = {Chris Godsil and Karen Meagher},
  title = {Erd{\H{o}}s--Ko--Rado Theorems: Algebraic Approaches},
  publisher = {Cambridge University Press},
  year      = {2016},
  address   = {Cambridge},
}

@misc{NarangJu2025,
  author        = {Narang, Ijay and Ju, Muchen},
  title         = {Sharp Inner Product Correlations for Hypercube Bijections},
  year          = {2025},
  eprint        = {2509.00716},
  archivePrefix = {arXiv},
  primaryClass  = {math.CO},
  url           = {https://arxiv.org/abs/2509.00716}
}

@unpublished{samorodnitsky1998extremal,
  author = {Samorodnitsky, Alex},
  title  = {Extremal properties of solutions for {Delsarte}'s linear program},
  note   = {Unpublished manuscript, available at \url{http://www.cs.huji.ac.il/~salex/papers/old_sq_measure.ps}},
  year   = {1998},
  url    = {http://www.cs.huji.ac.il/~salex/papers/old_sq_measure.ps}
}

@article{liu2026smoothed,
  title={Smoothed Score Queries and the Complexity of Sampling},
  author={Liu, Jingbo},
  journal={arXiv preprint arXiv:2605.27769},
  year={2026}
}

@book{watson1944bessel,
  author    = {Watson, G. N.},
  title     = {A Treatise on the Theory of Bessel Functions},
  edition   = {2nd},
  publisher = {Cambridge University Press},
  year      = {1944}
}

@article {bachoc2009optimality,
    AUTHOR = {Bachoc, Christine and Vallentin, Frank},
     TITLE = {Optimality and uniqueness of the {$(4,10,1/6)$} spherical
              code},
   JOURNAL = {Journal of Combinatorial Theory. Series A},
  FJOURNAL = {Journal of Combinatorial Theory. Series A},
    VOLUME = {116},
      YEAR = {2009},
    NUMBER = {1},
     PAGES = {195--204},
      ISSN = {0097-3165,1096-0899},
   MRCLASS = {94B25},
  MRNUMBER = {2469257},
       DOI = {10.1016/j.jcta.2008.05.001},
       URL = {https://doi.org/10.1016/j.jcta.2008.05.001},
}

@article{neuman2004inequalities,
  title={Inequalities involving {B}essel functions of the first kind},
  author={Neuman, Edward},
  journal={Journal of Inequality in Pure and Applied Mathematics},
  volume={5},
  number={4},
  year={2004}
}
